\documentclass[reqno,a4paper,11pt]{amsart}

\numberwithin{equation}{section}
\usepackage[utf8]{inputenc}
\usepackage{amsmath, amsthm, amssymb, amscd, accents, bm, amsfonts}
\usepackage{mathtools}
\usepackage{url}
\usepackage{mathrsfs,dsfont}
\usepackage{mathtools}
\usepackage{mdwlist}
\usepackage{paralist}
\usepackage{datetime}
\usepackage{hyperref}
\usepackage{colonequals}
\usepackage{soul}
\usepackage[dvipsnames]{xcolor}
\usepackage{enumitem}
\setlist[enumerate,1]{
    label=(\roman*),
    ref=\roman*
}

\usepackage[sort,nocompress,noadjust]{cite}
\theoremstyle{plain}
\newtheorem{thm}{Theorem}
\newtheorem{theorem}[thm]{Theorem}
\newtheorem*{theorem*}{Theorem}

\newtheorem{lemma}[thm]{Lemma}

\newtheorem{cor}[thm]{Corollary}
\newtheorem{prop}[thm]{Proposition}

\theoremstyle{definition}
\newtheorem{definition}[thm]{Definition}
\newtheorem{remark}[thm]{Remark}

\newtheorem{example}[thm]{Example}

\numberwithin{equation}{section}
\numberwithin{thm}{section}
\def\N{{\mathbb N}}
\def\Z{{\mathbb Z}}
\def\R{{\mathbb R}}

\def\Q{{\mathbb Q}}

\usepackage{nmath}
\def\Rlex{\R^2_{\text{lex}}}

\allowdisplaybreaks

\begin{document}

\title[Rigidity of isometries on vector and normed lattices]{Rigidity of isometries on vector and normed lattices}

\author[Noel Murasko]{Noel Murasko}
\address{Department of Mathematical and Statistical Sciences, University of Alberta, Edmonton, AB, Canada, T6G 2G1}
\email{murasko@ualberta.ca}

\author[Tomasz Szczepanski]{Tomasz Szczepanski}
\address{Department of Mathematics, University of British Columbia, Vancouver, BC, Canada, V6T 1Z2}
\email{tomasz@math.ubc.ca}

\date{\today}
\subjclass[2020]{Primary 46A40; Secondary 46B04, 46B42, 39B52}
\keywords{Midpoint injectivity, isometries, vector lattices, normed lattices, functional equations, order preservation}

\begin{abstract}
We study rigidity phenomena for maps on vector and normed lattices arising from isometric, order-theoretic, and modulus-preserving conditions. We first investigate the role of strict convexity in Baker's nonsurjective version of the Mazur-Ulam theorem. In the process, we develop a theory of midpoint injective functions and investigate their connections with convexity and monotonicity. Using this framework, we prove a generalization of Baker's theorem.

As our first main application, we show that every modulus isometry, i.e., a map $T:X\to Y$, where $X$ is a sublattice of a vector lattice $Y$, satisfying 
$$|T(x)-T(y)|=|x-y|, \qquad x,y\in X,$$
is affine and has a disjointness-preserving linear part. When $X$ is an ideal, the linear part is moreover a bijection of $X$ onto itself and an involution. In the normed lattice setting, we also show that every positive norm isometry into a strictly convex normed lattice is not only affine, but has a linear part that is a lattice homomorphism.

Finally, we prove a nonlinear order-isometric rigidity theorem. After shifting to the origin, every order-preserving norm isometry between normed lattices whose codomain has a strictly monotone norm, preserves suprema and infima and is both disjointness preserving and disjointly additive. These conclusions do not force linearity, as shown by an explicit family of nonlinear isometries on~$L_1[0,1]$.
\end{abstract}

\maketitle



\section{Introduction}
\label{sec:introduction}
It is a recurring theme in functional analysis that maps defined by a metric or order-theoretic condition acquire additional algebraic structure that is not assumed \textit{a priori}. A classical example of this rigidity phenomenon is the Mazur-Ulam theorem, which states that a surjective isometry between real normed spaces must be affine. The original proof appeared in \cite{Mazur1932}, while shorter proofs can be
found in \cite{Vaisala2003,Nica2012}. The aim of this paper is to develop and study analogous rigidity phenomena in the setting of vector and normed lattices, where functional equations are induced by the lattice modulus or the order structure.

In general, the surjectivity assumption in the Mazur-Ulam theorem cannot be omitted; for example, $T\colon\R\to\ell_\infty^2$ defined by
$T(x):=(x,\sin x)$ is an isometry that is not affine. However, Baker \cite{Baker1971} showed that one can remove the requirement of surjectivity by replacing it with a geometric condition on the codomain.

There are several equivalent definitions of strictly convex normed spaces. For our purposes, it will be convenient to use the following: a normed space~$(Y,\norm{\cdot})$ is \emph{strictly convex} if it satisfies
\begin{equation}\label{eq:sc-norm-MI}
    \|x\|=\|y\|
=\left\|\frac{x+y}{2}\right\|
\quad\Longrightarrow\quad
x=y,
\qquad x,y\in Y.
\end{equation}
Baker's theorem may be stated as follows.
\begin{theorem}[{Baker \cite{Baker1971}}]
\label{thm:Baker}
If~$X$ and~$Y$ are normed spaces with~$Y$ strictly convex, and~$T:X\to Y$ is an isometry, then~$T$ is affine.
\end{theorem}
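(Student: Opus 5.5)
The plan is to show first that $T$ preserves midpoints, i.e.\ $T\bigl(\frac{x+y}{2}\bigr)=\frac{T(x)+T(y)}{2}$ for all $x,y\in X$, and then to pass from midpoint-affinity to affinity using continuity. For the midpoint step, fix $x,y\in X$ and put $m=\frac{x+y}{2}$, $d=\|x-y\|$. Since $T$ is an isometry, $\|T(x)-T(m)\|=\|T(m)-T(y)\|=d/2$ and $\|T(x)-T(y)\|=d$. Set $u:=T(x)-T(m)$ and $v:=T(m)-T(y)$. Then $\|u\|=\|v\|=d/2$ and $u+v=T(x)-T(y)$, so
\[
\left\|\frac{u+v}{2}\right\|=\frac d2=\|u\|=\|v\|.
\]
Strict convexity in the form \eqref{eq:sc-norm-MI} now gives $u=v$, that is, $T(x)-T(m)=T(m)-T(y)$. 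Rearranging, $T(m)=\frac{T(x)+T(y)}{2}$. If $d=0$ the claim is trivial, so no case split is needed beyond that.

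Next, replace $T$ by $S:=T-T(0)$. Then $S(0)=0$, $S$ is still an isometry, and $S$ still preserves midpoints. Taking $y=0$ gives $S(x/2)=S(x)/2$. Combining this with midpoint preservation yields
\[
S(x+y)=2S\Bigl(\frac{x+y}{2}\Bigr)=S(x)+S(y),
\]
so $S$ is additive. Additivity gives $\Q$-homogeneity. Since $S$ is $1$-Lipschitz, it is continuous, and therefore $S(tx)=tS(x)$ for all real $t$ by density of $\Q$ in $\R$. Hence $S$ is linear, and $T=S+T(0)$ is affine.

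The only substantive step is the first one: choosing the right two vectors $u,v$ so that the defining condition \eqref{eq:sc-norm-MI} applies directly. Everything after that is the standard Jensen-equation argument. Note that surjectivity is never used, which is exactly the point of Baker's theorem.
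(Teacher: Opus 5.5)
Your proof is correct and follows the paper's route. You obtain midpoint preservation from strict convexity, then use the Jensen-type argument to get $\Q$-affinity, and finally use continuity of the isometry to upgrade to affinity. Applying \eqref{eq:sc-norm-MI} directly to $u$ and $v$ is just the unpacked form of the paper's Lemma~\ref{lem:generalized-Mazur-Ulam-Baker}, specialized to $f=\|\cdot\|_Y$ and $g=\|\cdot\|_X$, combined with Proposition~\ref{prop:MI-equiv}; you also prove the midpoint-to-$\Q$-affine step that the paper only cites.
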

Baker's key insight in proving Theorem \ref{thm:Baker} was the relationship between a strictly convex normed space and midpoints, as exemplified by \eqref{eq:sc-norm-MI}. In this paper, we show that this relationship can be abstracted, which allows us to extend Baker's theorem to a much more general setting.

\subsection*{Midpoint injectivity} We begin by standardizing some terminology. Throughout, all vector spaces are assumed to be real and nontrivial. A map
$T\colon X\to Y$ between vector spaces is called \emph{$\Q$-linear} if it
satisfies
\[
T(px+qy)=pT(x)+qT(y),
\qquad p,q\in\Q,\quad x,y\in X.
\]
We say that~$T$ is \emph{$\Q$-affine} if the map~$U\deq T-T(0)$ is~$\Q$-linear. As usual, $T$ is called \emph{affine} if~$U$ is \emph{linear}. If~$(X,\norm{\cdot}_X)$ and~$(Y,\norm{\cdot}_Y)$ are normed spaces, we say that~$T:X\to Y$ is a (norm) \emph{isometry} if it satisfies
\begin{equation*}
\norm{T(x)-T(y)}_Y=\norm{x-y}_X,\qquad  x,y\in X.
\end{equation*}
We emphasize that in our work, maps (including isometries) are never assumed \textit{a priori} to be linear or affine.

Equation \eqref{eq:sc-norm-MI} motivates the following definition. We say that a map~$f\colon X\to Y$ between vector spaces is \emph{midpoint injective} if, for all~$x,y\in X$, we have
\begin{equation}
\label{eq:midpoint_injective}
f(x)=f(y)=f\left(\frac{x+y}{2}\right)
\quad\implies \quad
x=y.
\end{equation}
Thus, a normed space is strictly convex if and only if its norm, when viewed as a function into~$\R$, is midpoint injective. However, there are several other natural examples of midpoint injective functions. Every injective function is trivially midpoint injective, and, as we will see in Lemma \ref{lem:strictly-quasiconvex-implies-MI}, the same is true of every strictly quasiconvex function. Most importantly for our purposes, Lemma \ref{lem:modulus_is_midpoint_injective} shows that the modulus on any vector lattice is midpoint injective.

We show that midpoint injectivity is sufficient to
recover a rational version of Baker's affinity conclusion under a mild homogeneity assumption. We say that a map~$f:X\to Y$ between vector spaces is \emph{dyadically homogeneous} if it satisfies~$f(2^k x)=2^k f(x)$ for all~$x\in X$ and~$k\in\Z$.
\begin{lemma}
\label{lem:generalized-Mazur-Ulam-Baker}
Let~$X$, $Y$, and~$Z$ be vector spaces. Let
$f\colon Y\to Z$ and~$g\colon X\to Z$ be dyadically homogeneous
functions, and suppose additionally that~$f$ is midpoint injective. If
$T\colon X\to Y$ satisfies
\begin{equation}
\label{eq:generalized-isometry}
f\bigl(T(x)-T(y)\bigr)=g(x-y),
\qquad x,y\in X,
\end{equation}
then~$T$ is~$\Q$-affine.
\end{lemma}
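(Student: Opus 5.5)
The approach follows Baker's argument: first show that $T$ preserves midpoints, then pass from midpoint preservation to $\Q$-affinity by a standard dyadic argument. Only \eqref{eq:generalized-isometry}, dyadic homogeneity, and midpoint injectivity of $f$ are used. No norm or triangle inequality is available, so everything must come from the functional equation itself.

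\textbf{Step 1 (midpoints).} Fix $x,y\in X$ and put $m=\frac{x+y}{2}$. Set $a\deq T(x)-T(m)$ and $b\deq T(m)-T(y)$, so that $a+b=T(x)-T(y)$. By \eqref{eq:generalized-isometry} we have $f(a)=g(x-m)=g\bigl(\tfrac{x-y}{2}\bigr)$ and $f(b)=g(m-y)=g\bigl(\tfrac{x-y}{2}\bigr)$. Moreover
\[
f\Bigl(\frac{a+b}{2}\Bigr)=\tfrac12 f(a+b)=\tfrac12 g(x-y)=g\Bigl(\frac{x-y}{2}\Bigr),
\]
where the first equality uses dyadic homogeneity of $f$ (with $k=-1$) and the last uses dyadic homogeneity of $g$. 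Thus $f(a)=f(b)=f\bigl(\frac{a+b}{2}\bigr)$, and midpoint injectivity gives $a=b$. Hence $T(m)=\frac{T(x)+T(y)}{2}$, so $T$ satisfies Jensen's equation.

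\textbf{Step 2 (from Jensen to $\Q$-affine).} Let $U\deq T-T(0)$. Then $U(0)=0$, and $U$ still preserves midpoints. Taking $y=0$ gives $U(x/2)=U(x)/2$. Applying this to $x+y$ gives $U(x+y)=2U\bigl(\frac{x+y}{2}\bigr)=U(x)+U(y)$, so $U$ is additive. Additivity gives $U(nx)=nU(x)$ for $n\in\N$, and $U(-x)=-U(x)$ since $U(x)+U(-x)=U(0)=0$. Writing $x=q\cdot\frac{x}{q}$ then yields $U\bigl(\frac{p}{q}x\bigr)=\frac{p}{q}U(x)$, so $U$ is $\Q$-linear.

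\textbf{Main obstacle.} The only delicate point is Step 1: choosing the triple $a$, $b$, $\frac{a+b}{2}$ so that midpoint injectivity applies. The key observation is that the quantity $a+b=T(x)-T(y)$ is controlled by \eqref{eq:generalized-isometry}. Homogeneity of both $f$ and $g$ at the factor $2^{-1}$ then matches $f\bigl(\frac{a+b}{2}\bigr)$ with $f(a)$ and $f(b)$. Note that the argument needs no injectivity or other property of $g$.
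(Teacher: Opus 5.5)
Your proof is correct and follows the paper's argument. You first get midpoint preservation from dyadic homogeneity of $f$ and $g$ together with midpoint injectivity, and then pass from Jensen's equation to $\Q$-affinity. The only differences are cosmetic: you apply midpoint injectivity directly to $a=T(x)-T(m)$, $b=T(m)-T(y)$ instead of going through Proposition~\ref{prop:MI-equiv}~(\ref{item:MI-equiv-UMP}), and you prove the Jensen-to-$\Q$-affine step yourself where the paper cites it as Lemma~\ref{lem:midpoint-implies-Qaffine}.
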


In the special case in which~$Z=\R$, $X$ and~$Y$ are normed spaces,
and~$Y$ is strictly convex, we may take~$g(x):=\|x\|_X$ and~$f(y):=\|y\|_Y.$ Lemma~\ref{lem:generalized-Mazur-Ulam-Baker} then implies that every
norm isometry~$T\colon X\to Y$ is~$\Q$-affine. Since every norm isometry is continuous, every~$\Q$-affine isometry is affine, and Baker's theorem
follows.

\subsection*{Modulus isometries} Using Lemma \ref{lem:generalized-Mazur-Ulam-Baker}, we investigate rigidity phenomena in the setting of vector lattices. First, we recall some relevant terminology that will be used throughout this paper.

An \emph{ordered vector space} is a vector space~$X$ equipped with a partial order~$\leq$ that is compatible with the linear structure. If, in addition, every pair~$x,y\in X$ admits a supremum and an infimum, we call~$X$ a \emph{vector lattice}. We write
\[
x\vee y:=\sup\{x,y\},
\qquad
x\wedge y:=\inf\{x,y\}.
\]
The positive cone and modulus of~$X$ are then defined by
\[
X_+:=\{x\in X:x\geq 0\}
\quad\text{and}\quad
|x|:=x\vee(-x),
\]
respectively. We refer to~$|x|$, $x\wedge y$, and~$x\vee y$, together with~$x^+:=x\vee 0$ and~$x^-:=(-x)\vee 0$, as \emph{lattice operations}. We say two vectors~$x,y\in X$ are \emph{disjoint} and write~$x\perp y$ if~$|x|\wedge |y|=0$.

Suppose~$X$ is a vector lattice and that~$Y\subseteq X$. We say that~$Y$ is a \emph{sublattice} if it is a vector subspace of~$X$ closed under finite lattice operations. Moreover, a sublattice~$Y$ is an \emph{ideal} if whenever~$x\in X$ and~$y\in Y$ satisfy~$|x|\leq |y|$, it follows that~$x\in Y$.

Next, a map~$T:X\to Y$ between vector lattices~$X$ and~$Y$ is said to be
\begin{enumerate}
    \item \emph{positive} if~$T(X_+)\subseteq Y_+$;
    \item \emph{order-preserving} if~$T(x)\leq T(y)$ whenever~$x\leq y$;
    \item \emph{disjointness preserving} if~$T(x)\perp T(y)$ whenever~$x\perp y$;
    \item \emph{disjointly additive} (or orthogonally additive) if~$T(x+y)=T(x)+T(y)$ whenever~$x\perp y$.
\end{enumerate}
We say that~$T$ \emph{preserves all lattice operations} if it preserves the five lattice operations listed above. Finally, a linear map preserving all lattice operations is called a \emph{lattice homomorphism} and a bijective lattice homomorphism is called a \emph{lattice isomorphism}.
For further background on vector lattices, we refer the reader
to \cite[Chapter~1]{AliprantisBurkinshaw2006}.

The modulus map~$x\mapsto |x|$ on any vector lattice is dyadically homogeneous and, as we will see in
Lemma~\ref{lem:modulus_is_midpoint_injective}, it is also
midpoint injective. Expanding on these ideas allows us to prove the following statement, which may be regarded as a vector lattice analogue of the original Mazur-Ulam theorem.

\begin{theorem}
\label{thm:moduli_isometry}
Let~$Y$ be a vector lattice and let~$X\subseteq Y$ be a sublattice. If~$T\colon X\to Y$ satisfies
\begin{equation}\label{eq:moduli_isometry}
|T(x)-T(y)|=|x-y|, \qquad x,y\in X,
\end{equation}
then~$T-T(0)$ is a disjointness preserving linear map. Furthermore, if~$X$ is an ideal, then~$\operatorname{Range}(T-T(0))=X$ and
$(T-T(0))^2=\operatorname{Id}_X$.
\end{theorem}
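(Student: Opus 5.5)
The plan is to reduce to $U\deq T-T(0)$ and get $\Q$-linearity from Lemma~\ref{lem:generalized-Mazur-Ulam-Baker}. Then upgrade to real linearity using only the lattice structure, with no Archimedean hypothesis. Disjointness preservation and the involution property will both come from one "disjoint band" cancellation trick.

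\textbf{Step 1: $\Q$-linearity.} Clearly $U(0)=0$ and $|U(x)-U(y)|=|x-y|$ for all $x,y\in X$. Apply Lemma~\ref{lem:generalized-Mazur-Ulam-Baker} with $Z=Y$, $f(y)=|y|$ on $Y$ and $g(x)=|x|$ on $X$. Both maps are dyadically homogeneous, and $f$ is midpoint injective by Lemma~\ref{lem:modulus_is_midpoint_injective}. Hence $U$ is $\Q$-linear. Taking $y=0$ gives $|U(x)|=|x|$.

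\textbf{Step 2: real homogeneity.} This is the main obstacle, since $Y$ need not be Archimedean. An estimate $|U(\lambda x)-\lambda U(x)|\le 2|\lambda-q|\,|x|$ for all $q\in\Q$ does not force equality, so I would instead use equality in the triangle inequality.

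Fix $\lambda\in\R$ and rationals $p<\lambda<q$. Set
\[
a\deq U(\lambda x)-U(px),\qquad b\deq U(qx)-U(\lambda x),\qquad c\deq a+b=(q-p)U(x).
\]
Then $|a|=(\lambda-p)|x|$, $|b|=(q-\lambda)|x|$ and $|c|=(q-p)|x|$, so $|a+b|=|a|+|b|$. The standard characterization of equality in the triangle inequality gives $a^+\perp b^-$ and $a^-\perp b^+$. From this I will check that $c^+=a^++b^+$ and $c^-=a^-+b^-$, so $0\le a^\pm\le c^\pm$.

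Put $t=(\lambda-p)/(q-p)$. Then $a^++a^-=t(c^++c^-)$, so
\[
tc^+-a^+=a^--tc^-.
\]
The left side lies in the ideal generated by $c^+$ and the right side in the ideal generated by $c^-$. These ideals are disjoint, so both sides vanish. Thus $a=tc=(\lambda-p)U(x)$, and therefore $U(\lambda x)=U(px)+(\lambda-p)U(x)=\lambda U(x)$. So $U$ is linear.

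\textbf{Step 3: disjointness preservation.} Let $x\perp y$ and write $u=U(x)$, $v=U(y)$. Then
\[
|u+v|=|x+y|=|x|+|y|=|u|+|v|,\qquad |u-v|=|x-y|=|u|+|v|.
\]
We also have the identity $|u+v|\wedge|u-v|=\bigl||u|-|v|\bigr|$. Combining these gives $\bigl||u|-|v|\bigr|=|u|+|v|$, which forces $|u|\wedge|v|=0$.

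\textbf{Step 4: the ideal case.} If $X$ is an ideal, then $|U(x)|=|x|$ gives $U(X)\subseteq X$, so $U^2$ is defined.

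Take $x\ge0$ and $y\deq U(x)$, so $|y|=x$ and $y^+,y^-\in X$. Since $U$ is disjointness preserving, $U(y^+)\perp U(y^-)$. We also have $|U(y^\pm)|=y^\pm$ and $U(y^+)+U(y^-)=U(x)=y^+-y^-$. Hence
\[
U(y^+)-y^+=-y^--U(y^-).
\]
The left side lies in the ideal of $y^+$ and the right side in that of $y^-$, so both vanish. This gives $U(y^+)=y^+$ and $U(y^-)=-y^-$, hence $U^2(x)=U(y)=y^++y^-=x$.

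Since every element of $X$ is a difference of positive elements, linearity gives $U^2=\operatorname{Id}_X$. In particular $U$ maps $X$ onto $X$, so $\operatorname{Range}(U)=X$.
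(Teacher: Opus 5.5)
Your proof is correct. Its skeleton matches the paper's: normalize to $U=T-T(0)$, get $\Q$-linearity from Lemma~\ref{lem:generalized-Mazur-Ulam-Baker}, upgrade to real linearity with no Archimedean hypothesis, and in the ideal case cancel $U(y^+)-y^+=-y^--U(y^-)$ across disjoint pieces. Two steps, however, are done differently.

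For $\Q$-linearity, the paper passes through Theorem~\ref{thm:general_equation_is_Q_linear}. That theorem's induction exists to prove dyadic homogeneity of an arbitrary $g$, and it also yields disjointness preservation from $g(|x|)=g(x)$. Here $g=|\cdot|$ is obviously dyadically homogeneous, so your direct application of the lemma is shorter.

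For real homogeneity, the paper uses Proposition~\ref{prop:lin-MI}, (ii)$\Rightarrow$(i). Setting $e_\lambda=U(\lambda x)-\lambda U(x)$, for $|\lambda|>1$ it shows $|U(x)+e_\lambda/\lambda|=|U(x)+e_\lambda/(\lambda-1)|=|U(x)|$. It then kills $e_\lambda$ with the collinear form of midpoint injectivity of the modulus (Proposition~\ref{prop:MI-convex}(iv)), and rescales to reach $|\lambda|\le 1$. You instead use a rational bracketing $p<\lambda<q$, the equality case of the triangle inequality, and the disjoint ideals generated by $c^+$ and $c^-$. This is a self-contained order-theoretic replacement that avoids the convexity and midpoint-injectivity machinery. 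It is no less general: additivity gives $U(\lambda x)-U(px)=U((\lambda-p)x)$, so your argument runs under hypothesis (ii) of Proposition~\ref{prop:lin-MI} alone. The paper's route, in exchange, packages the step as a reusable criterion tied to its midpoint-injectivity theme.

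Two small remarks. Step~3 can be shortened to $|u+v|=|x+y|=|x-y|=|u-v|$ followed by \eqref{eq:disjointness-fact}. In Step~4 the disjointness $U(y^+)\perp U(y^-)$ is never used, because $|U(y^\pm)|=y^\pm$ already puts the two sides in the disjoint ideals generated by $y^+$ and $y^-$.
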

Interpreting $|x-y|$ as a vector-valued
distance, we refer to maps satisfying \eqref{eq:moduli_isometry} as \emph{modulus isometries}.

The affinity conclusion in Theorem \ref{thm:moduli_isometry} can be seen as a consequence of a more general principle. In Theorem \ref{thm:general_equation_is_Q_linear} we show that if the expression
\begin{equation*}
|T(x)-T(y)|
\end{equation*}
depends only on~$x-y$ for every~$x,y\in X$, then~$T$ must be~$\Q$-affine. The key observation is that if the function~$f$ in Lemma \ref{lem:generalized-Mazur-Ulam-Baker} is the modulus of a vector lattice, then the corresponding function~$g$ is \emph{necessarily} dyadically homogeneous. As a further consequence of Theorem \ref{thm:moduli_isometry}, we obtain in Corollary \ref{cor:T|x-y|_is_lattice_homomorphism} a functional-equation characterization of lattice homomorphisms.

\subsection*{Positive and order-preserving isometries} We next turn to positive isometries between normed lattices. A norm~$\norm{\cdot}$ on a vector lattice~$X$ is called a \emph{lattice norm} if it satisfies
$$
|x|\leq |y|
\quad\Longrightarrow\quad
\|x\|\leq\|y\|,
\qquad x,y\in X.
$$
We call~$(X,\norm{\cdot})$ a \emph{normed lattice}. A lattice norm is called \emph{strictly monotone} if it satisfies~$\norm{x}<\norm{y}$ whenever~$|x|<|y|$.

To motivate our investigation, we recall a result of Abramovich. If~$X$ and~$Y$ are normed lattices and~$T:X\to Y$ is a positive and surjective isometry, then~$T-T(0)$ is a lattice isomorphism; see \cite[Theorem 1]{Abramovich1988}. We note that in \cite{Abramovich1988}, Abramovich assumes that all isometries are linear, but this assumption is not needed for his theorem. Indeed, since~$T$ is surjective, the Mazur-Ulam theorem guarantees that~$T$ must be affine. Moreover, a positive affine map into a normed lattice has a positive linear part~$T-T(0)$. Thus, Abramovich's theorem provides a lattice-theoretic counterpart of the Mazur-Ulam theorem for positive isometries.

Abramovich also noted that, even under the assumption of linearity, surjectivity is generally necessary to conclude that the map is a lattice homomorphism. However, using properties of midpoint injective functions, we show that one may replace the surjectivity requirement with strict convexity of the codomain; this result may be seen as a Baker-type analogue of Abramovich's theorem.

\begin{prop}\label{prop:Abramovich-Baker-version}
If~$X$ and~$Y$ are normed lattices with the norm on~$Y$ strictly convex, and~$T:X\to Y$ is a positive isometry, then~$T-T(0)$ is a lattice homomorphism.
\end{prop}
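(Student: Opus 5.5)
The plan is to reduce to a positive linear isometry using Baker's theorem, and then use strict convexity to force $U$ to preserve the modulus.

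First, Theorem~\ref{thm:Baker} applies directly because $Y$ is strictly convex. So $T$ is affine, and $U\deq T-T(0)$ is a linear isometry $X\to Y$.

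Next I show that $U$ is positive. Fix $x\in X_+$. For every $n\in\N$ we have $nx\in X_+$, so $0\le T(nx)=T(0)+nU(x)$. Dividing by $n$ gives $U(x)+\tfrac1n T(0)\in Y_+$ for all $n$. The positive cone of a normed lattice is norm closed, since $y\mapsto y^-$ is norm continuous. Letting $n\to\infty$ therefore yields $U(x)\ge 0$.

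The main step is to show $|U(x)|=U(|x|)$ for all $x\in X$. For a linear map this is equivalent to being a lattice homomorphism, because every lattice operation is expressible through the modulus, e.g.\ $x\vee y=\tfrac12(x+y+|x-y|)$.

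Fix $x$ and put $a\deq U(x^+)\ge0$ and $b\deq U(x^-)\ge 0$. Set
\[
u\deq |a-b|=|U(x)|,\qquad v\deq a+b=U(|x|).
\]
Since $a,b\ge0$, we have $u\le v$. Since $U$ is an isometry and $\|\cdot\|_X$ is a lattice norm,
\[
\|u\|=\|U(x)\|=\|x\|=\||x|\|=\|U(|x|)\|=\|v\|.
\]
The midpoint $w\deq\tfrac12(u+v)$ satisfies $0\le u\le w\le v$. Because $\|\cdot\|_Y$ is a lattice norm, this gives $\|u\|\le\|w\|\le\|v\|=\|u\|$, so $\|u\|=\|v\|=\|w\|$. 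Strict convexity of $Y$ in the form \eqref{eq:sc-norm-MI} then gives $u=v$, that is, $|U(x)|=U(|x|)$.

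I expect the only delicate point to be noticing that the midpoint is sandwiched between $u$ and $v$, so that monotonicity of the lattice norm supplies the equality of norms that strict convexity needs. The passage from positivity of $T$ to positivity of $U$ is routine once one uses that the cone is closed.
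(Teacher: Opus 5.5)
Your proof is correct and follows the paper's route. You use Baker's theorem for affinity, a limiting argument for positivity of $U$, and then the midpoint sandwich $|U(x)|\le\frac12\bigl(|U(x)|+U(|x|)\bigr)\le U(|x|)$ together with strict convexity; you simply inline what the paper packages as Lemma~\ref{lem:strictly_monotone} and Lemma~\ref{lem:strictly-monotone-homomorphism}, and you use norm-closedness of $Y_+$ where the paper invokes the Archimedean property.
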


The situation changes, however, when the codomain is not strictly convex. The following result, which Abramovich attributed to Veksler, shows that surjectivity can still be removed if the norm on the codomain is strictly monotone, provided affinity is assumed.

\begin{theorem}[{Veksler \cite[Corollary 6]{Abramovich1988}}]\label{thm:Veksler}
If~$X$ and~$Y$ are normed lattices with the norm on~$Y$ strictly monotone, and~$T:X\to Y$ is a positive and affine isometry, then~$T-T(0)$ is disjointness preserving. Consequently, $T-T(0)$ is a lattice homomorphism.
\end{theorem}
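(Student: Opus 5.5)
Write $S\deq T-T(0)$. Since $T$ is affine, $S$ is linear, and it is a norm isometry, so $\|Sx\|=\|x\|$ for all $x\in X$. Positivity of $T$ is used in the form noted above: a positive affine map has a positive linear part. If we did not have $S\ge0$ directly, we would argue as follows. For $x\ge 0$ and $n\in\N$ we have $T(nx)=T(0)+nSx\ge 0$, so $Sx\ge -T(0)/n$ for every $n$. In a normed lattice the positive cone is norm closed, so letting $n\to\infty$ gives $Sx\ge0$. So $S$ is a positive linear isometry. Once we know $S$ is disjointness preserving, the lattice homomorphism conclusion is standard: for a positive operator, disjointness preservation gives $S(x^+)\wedge S(x^-)=0$, and therefore $|Sx|=|S x^+-Sx^-|=Sx^++Sx^-=S|x|$.

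The core step is to show that $x\perp y$ implies $Sx\perp Sy$. By positivity and $|Sx|\le S|x|$, it suffices to treat $x,y\ge0$ with $x\wedge y=0$. In that case $|x-y|=x+y$, so $\|x-y\|=\|x+y\|$, and the isometry property gives $\|Sx-Sy\|=\|Sx+Sy\|$. Put $u=Sx\ge0$ and $v=Sy\ge0$. We always have $|u-v|=u\vee v-u\wedge v\le u+v$, with $u+v-|u-v|=2(u\wedge v)$. So if $u\wedge v\ne0$, then $|u-v|<u+v=|u+v|$, and strict monotonicity gives $\|u-v\|<\|u+v\|$. This contradicts the equality of norms, hence $u\wedge v=0$.

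We expect no serious obstacle. The only delicate points are two. First, the reduction from general disjoint $x,y$ to positive ones: we use $|x|\perp|y|$ and the inequality $|Sx|\le S|x|$, which holds for positive linear $S$, so disjointness of $S|x|$ and $S|y|$ passes to $Sx$ and $Sy$. Second, confirming that positivity of $T$ yields positivity of $S$, handled above via closedness of the cone.
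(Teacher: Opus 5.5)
Your proof is correct, but it is organized differently from the paper's. The paper gives no standalone proof of this statement. Instead it recovers it from Theorem \ref{thm:nonlinear-Veksler}. A positive affine $T$ has a positive linear part and is therefore order-preserving. For any order-preserving isometry into a strictly monotone normed lattice, the paper shows $0\le T(x)\vee T(y)-T(x)\wedge T(y)\le T(x\vee y)-T(x\wedge y)$ with equal norms, so $T-T(0)$ preserves suprema and infima for \emph{all} pairs. Disjointness preservation then follows from Lemma \ref{lem:equivalence_for_nonlinear_veksler} by a case analysis on positive and negative parts, and the lattice-homomorphism conclusion follows because a linear map is odd. You use the same mechanism, namely strict monotonicity detecting the gap $2(u\wedge v)$ between $|u-v|$ and an upper bound. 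However, you apply it only to disjoint positive pairs, where linearity turns $\|S(x+y)\|=\|x+y\|=\|x-y\|$ directly into $\|Sx-Sy\|=\|Sx+Sy\|$. You then finish with the standard facts $|Sx|\le S|x|$ and $|a-b|=a+b$ for disjoint $a,b\ge 0$. Your route is shorter and self-contained, and it bypasses Lemma \ref{lem:equivalence_for_nonlinear_veksler} entirely. The paper's route never uses linearity in its key step, which is what lets it drop affinity and obtain the nonlinear Theorem \ref{thm:nonlinear-Veksler}. In that setting neither $S(x+y)=Sx+Sy$ for disjoint $x,y$ nor $|Sx|\le S|x|$ is available in advance. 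Finally, your use of norm-closedness of the positive cone to get $S\ge 0$ is an equally valid substitute for the Archimedean argument the paper uses in the proof of Proposition \ref{prop:Abramovich-Baker-version}.
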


In contrast to Abramovich's result, Example \ref{ex:Veksler-linear} shows that one cannot generally remove the assumption that~$T$ is affine in Theorem \ref{thm:Veksler}. Motivated by this, we investigate what can be said about isometries into strictly monotone normed lattices, when the assumption on positivity and affinity is replaced with order preservation.

A finite-dimensional result in this direction was obtained by
Lemmens and van Gaans \cite[Proposition~4.1]{LemmensVanGaans2003}. They showed that if~$Y\subseteq\R^n$ is a lattice containing~$0$ and~$f:Y\to Y$ is an order-preserving isometry satisfying~$f(0)=0$, then~$f$ preserves suprema and infima whenever the ambient norm is monotone. Their result concerns self-maps of finite-dimensional lattice subsets. In contrast, our next theorem applies to maps between arbitrary normed lattices, at the expense of assuming that the norm on the codomain is strictly monotone.

\begin{theorem}\label{thm:nonlinear-Veksler}
Let~$X$ and~$Y$ be normed lattices, with the norm on~$Y$ strictly monotone, and let~$S:X\to Y$ be an order-preserving isometry. Then \mbox{$T\deq S-S(0)$} preserves suprema and infima. Moreover, $T$ is disjointness preserving and disjointly additive. Furthermore, $T$ preserves all lattice operations if and only if it is odd.
\end{theorem}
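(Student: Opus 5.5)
The plan is to normalize, prove the lattice identities directly from strict monotonicity, and then obtain the disjointness statements by splitting into positive and negative parts. First observe that $T=S-S(0)$ is again an order-preserving isometry, and that $T(0)=0$; in particular $T$ maps $X_+$ into $Y_+$ and $-X_+$ into $-Y_+$.

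\emph{Step 1: suprema and infima.} Fix $x,y\in X$. By order preservation,
\[
T(x\wedge y)\le T(x)\wedge T(y)\le T(x)\vee T(y)\le T(x\vee y).
\]
Put $a:=T(x\vee y)-T(x)\vee T(y)\ge0$ and $b:=T(x)\wedge T(y)-T(x\wedge y)\ge 0$. Using $u\vee v-u\wedge v=|u-v|$, this gives
\[
0\le |T(x)-T(y)|\le T(x\vee y)-T(x\wedge y)=|T(x)-T(y)|+a+b .
\]
Both outer vectors have the same norm. Indeed, $\|T(x\vee y)-T(x\wedge y)\|=\|x\vee y-x\wedge y\|=\||x-y|\|=\|x-y\|$, and $\||T(x)-T(y)|\|=\|T(x)-T(y)\|=\|x-y\|$. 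If the inequality were strict, strict monotonicity of the norm on $Y$ would give a strict inequality of norms, which is impossible. Hence $a+b=0$, and since $a,b\ge0$ we get $a=b=0$. I expect this to be the main step, because it is the only place where strict monotonicity enters. Everything after it is lattice bookkeeping.

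\emph{Step 2: disjointness.} From Step 1 with $y=0$ we get $T(x^+)=T(x)^+$ and $T(-x^-)=T(x\wedge0)=-T(x)^-$. Consequently
\[
T(x)=T(x^+)+T(-x^-).
\]
I then handle the sign cases one at a time.
\begin{itemize}
\item If $u,v\ge0$ and $u\perp v$, then $T(u)\wedge T(v)=T(0)=0$. Since $u+v=u\vee v$ for disjoint positive elements, $T(u+v)=T(u)\vee T(v)=T(u)+T(v)$.
\item If $u,v\le 0$ and $u\perp v$, the symmetric argument with $u+v=u\wedge v$ and $T(u)\vee T(v)=T(0)=0$ applies.
\item If $u\ge0\ge v$ and $u\perp v$, then $(u+v)^+=u$ and $-(u+v)^-=v$, so the displayed decomposition gives $T(u+v)=T(u)+T(v)$. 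Moreover $T(u+v)^+=T(u)$. For $p=T(u)\ge0$ and $q=-T(v)\ge0$, the identity $(p-q)^+=p$ forces $(p-q)^-=q$, so $p\perp q$.
\end{itemize}
For general disjoint $x,y$, the four pieces $x^\pm,y^\pm$ are pairwise disjoint, and $(x+y)^\pm=x^\pm+y^\pm$. Combining the three cases with the decomposition yields $T(x+y)=T(x)+T(y)$. It also shows that $|T(x)|=T(x^+)-T(-x^-)$ and $|T(y)|$ are disjoint, since each summand of one is disjoint from each summand of the other.

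\emph{Step 3: oddness.} If $T$ preserves all lattice operations, then $T(x^-)=T(x)^-=-T(-x^-)$. Thus $T(-z)=-T(z)$ for $z\ge0$, and disjoint additivity applied to $x=x^+-x^-$ gives $T(-x)=T(-x^+)+T(x^-)=-T(x)$. Conversely, if $T$ is odd, then $T(x^-)=T((-x)^+)=T(-x)^+=T(x)^-$. Disjoint additivity on $|x|=x^++x^-$ then gives $T(|x|)=|T(x)|$. Preservation of $\vee$, $\wedge$ and $x^+$ is already known from Step 1, so $T$ preserves all lattice operations.
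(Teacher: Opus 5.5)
Your proposal is correct and follows essentially the same route as the paper. Step~1 is the paper's squeeze $|T(x)-T(y)|\le T(x\vee y)-T(x\wedge y)$ combined with equality of norms and strict monotonicity, and Steps~2--3 reproduce the paper's separate lemma on maps with $T(0)=0$ that preserve suprema and infima, including the mixed-sign case via $z=x^+-y^-$. The only difference is cosmetic: you get disjoint additivity and $T(|x|)=|T(x)|$ from the splitting $T(x)=T(x^+)+T(-x^-)$, whereas the paper uses preservation of suprema directly.
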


Since every positive affine map into a normed lattice is order-preserving, Theorem \ref{thm:nonlinear-Veksler} can be used to recover Theorem \ref{thm:Veksler}. Importantly, the hypotheses of Theorem \ref{thm:nonlinear-Veksler} do not force~$S$ to be affine. In Example \ref{ex:nonlinear-Veksler} we construct an explicit family of nonlinear order-preserving isometries on~$L_1[0,1]$.

Theorem \ref{thm:nonlinear-Veksler} connects order-preserving isometries with several established classes of nonlinear operators. Since the normalized map~$T=S-S(0)$ is order-preserving, it is order bounded. The theorem shows that~$T$ is disjointly additive, and hence~$T$ is an \textit{abstract Urysohn operator}. Such operators have been extensively studied; see, for example, \cite{MazonSegura1990,AbasovPliev2018,Feldman2019,Pliev2021} and the references therein.

The simultaneous disjointness-preserving conclusion is also related to the nonlinear operators studied in \cite{Feldman2017,AbasovPliev2018}. In the odd case, $T$ additionally preserves all lattice operations, connecting the theorem with nonlinear lattice-preserving maps such as those studied by Feldman \cite{Feldman2013}. More broadly, nonlinear order isomorphisms and the extent to which they retain the structure of function spaces and Banach lattices have been investigated by Leung and Tang \cite{LeungTang2016, LeungTangPersistence2016}.

The structure of the paper is as follows. In Section~\ref{sec:midpoint_injectivity}, we develop a theory of midpoint
injective functions and prove Lemma~\ref{lem:generalized-Mazur-Ulam-Baker}. We also explore the connection between midpoint injectivity and various notions of convexity.

In Section~\ref{sec:mazur_ulam_baker} we study consequences of the generalized distance identity~\eqref{eq:generalized-isometry}. In particular, we prove Theorem~\ref{thm:moduli_isometry}.

Finally, in Section \ref{sec:isometries_in_normed_lattices} we investigate the relationship between midpoint injectivity and monotonicity, and prove Proposition \ref{prop:Abramovich-Baker-version} and Theorem \ref{thm:nonlinear-Veksler}.

\section{Midpoint injectivity}
\label{sec:midpoint_injectivity}

In this section, we develop tools for studying midpoint injective
functions, i.e., functions satisfying \eqref{eq:midpoint_injective}. These tools will be used in the proofs of the main results. As noted in the introduction, midpoint injective functions arise in the setting of vector and normed lattices, as well as in general normed spaces. We therefore investigate midpoint injectivity as a property of interest in its own right. In particular, in Subsection \ref{sub:midpoint_injectivity_and_convexity}, we study its connection with convexity and related geometric properties.

\subsection{A generalized Baker theorem}

We begin with the following proposition, which collects several useful equivalent formulations of midpoint injectivity.

\begin{prop}
\label{prop:MI-equiv}
Let~$X$ and~$Y$ be vector spaces, and let~$f\colon X\to Y$ be a
function. Then the following statements are equivalent:
\begin{enumerate}
\item\label{item:MI-equiv-MI}~$f$ is midpoint injective;

\item\label{item:MI-equiv-UMP} for all~$u,v,m\in X$,
$$
f(u-m)=f(m-v)=f\left(\frac{u-v}{2}\right)
\quad\Longrightarrow\quad
m=\frac{u+v}{2};
$$

\item\label{item:MI-equiv-MI-convenient} for all~$w,z\in X$,
$$
f(w)=f(z)=f(2w-z)
\quad\Longrightarrow\quad
w=z.
$$
\end{enumerate}
\end{prop}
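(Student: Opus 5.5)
The three statements are obtained from one another by affine changes of variables, so I will prove (i)$\Leftrightarrow$(iii) and (i)$\Leftrightarrow$(ii) directly. Each direction is a substitution followed by checking that the conclusions correspond.

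For (i)$\Leftrightarrow$(iii), substitute $x=z$ and $y=2w-z$. This is a bijection of $X\times X$ with inverse $z=x$, $w=(x+y)/2$. Under it, $(x+y)/2=w$, so the hypothesis $f(x)=f(y)=f\bigl(\tfrac{x+y}{2}\bigr)$ becomes $f(z)=f(2w-z)=f(w)$. The conclusion $x=y$ becomes $z=2w-z$, that is, $w=z$. Hence the implications in (i) and (iii) are the same statement after relabeling, and they are equivalent.

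For (i)$\Rightarrow$(ii), take $u,v,m$ satisfying the hypothesis of (ii), and set $x=u-m$ and $y=m-v$. Then $x+y=u-v$, so $\tfrac{x+y}{2}=\tfrac{u-v}{2}$, and the hypothesis of (ii) reads $f(x)=f(y)=f\bigl(\tfrac{x+y}{2}\bigr)$. By (i), $x=y$, which means $u-m=m-v$, i.e.\ $m=\tfrac{u+v}{2}$.

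For (ii)$\Rightarrow$(i), take $x,y$ with $f(x)=f(y)=f\bigl(\tfrac{x+y}{2}\bigr)$, and choose $m=0$, $u=x$, $v=-y$. Then $u-m=x$, $m-v=y$ and $\tfrac{u-v}{2}=\tfrac{x+y}{2}$, so the hypothesis of (ii) holds. Condition (ii) gives $0=\tfrac{x-y}{2}$, hence $x=y$.

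The only step needing any care is finding these parametrizations. Once they are chosen, every step is an identity, so I expect no real obstacle; the one thing to confirm is that each substitution is onto the relevant domain, which holds because each is an invertible affine change of variables.
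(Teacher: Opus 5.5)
Your proof is correct and is essentially the paper's argument: the paper also proves the equivalences by direct changes of variables, and your choice $m=0$, $u=x$, $v=-y$ for (ii)$\Rightarrow$(i) is exactly the one used there. The only difference is organizational: the paper closes the cycle (ii)$\Rightarrow$(i)$\Rightarrow$(iii)$\Rightarrow$(ii), using $w=\frac{u-v}{2}$, $z=m-v$ for the last step, whereas you route everything through (i) and prove (i)$\Rightarrow$(ii) directly via $x=u-m$, $y=m-v$.
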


\begin{proof}
\noindent
\textit{(\ref*{item:MI-equiv-UMP})$\implies$(\ref*{item:MI-equiv-MI})}. Suppose that~$f(x)=f(y) = f\left(\frac{x+y}{2}\right)$ for some~$x,y\in X$. Then
$$f(x-0)=f(0-(-y))=f\left(\frac{x-(-y)}{2}\right).$$
Hence~$0=\frac{x-y}{2}$, and therefore~$x=y$.

\smallskip

\noindent
\textit{(\ref*{item:MI-equiv-MI})$\implies$(\ref*{item:MI-equiv-MI-convenient})}. Suppose that~$f(w)=f(z) = f(2w-z)$ for some~$w,z\in X$. Letting~$x\deq 2w-z$ and~$y\deq z$, we have~$f(x)=f(y)=f\left(\frac{x+y}{2}\right)$. Midpoint injectivity yields~$x=y$, and hence~$w=z$.

\smallskip

\noindent
\textit{(\ref*{item:MI-equiv-MI-convenient})$\implies$(\ref*{item:MI-equiv-UMP})}. Suppose that~$f(u-m)=f(m-v) = f\left(\frac{u-v}{2}\right)$ for some~$u,v,m\in X$. Letting~$w\deq \frac{u-v}{2}$ and~$z\deq m-v$ implies that~$f(w)=f(z)=f(2w-z)$. Condition (\ref*{item:MI-equiv-MI-convenient}) gives~$w=z$, and therefore~$m=\frac{u+v}{2}$.
\end{proof}

We note that in \cite[Lemma~2]{Baker1971}, Baker proved that every norm on a strictly convex normed space satisfies (\ref*{item:MI-equiv-UMP}) of Proposition \ref{prop:MI-equiv}. Indeed, it is precisely this property that is needed in the proof of Baker's theorem. The final necessary ingredient is the standard characterization of~$\Q$-affine maps in terms of midpoint preservation.

\begin{lemma}
\label{lem:midpoint-implies-Qaffine}
Let~$X$ and~$Y$ be vector spaces. A map~$T\colon X\to Y$ is
$\Q$-affine if and only if
\begin{equation*}
T\left(\frac{x+y}{2}\right)
=
\frac{T(x)+T(y)}{2},
\qquad x,y\in X.
\end{equation*}
\end{lemma}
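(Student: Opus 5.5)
The plan is to reduce to the linear part $U\deq T-T(0)$ and use the classical Jensen-equation argument. The forward direction is immediate: if $U$ is $\Q$-linear, then with $p=q=\tfrac12$ we get $U\bigl(\tfrac{x+y}{2}\bigr)=\tfrac12U(x)+\tfrac12U(y)$. Adding $T(0)$ to both sides gives the midpoint identity for $T$, because $\tfrac12T(0)+\tfrac12T(0)=T(0)$.

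For the converse, assume $T$ preserves midpoints. Then $U$ also preserves midpoints, since the constant $T(0)$ splits evenly between the two halves, and moreover $U(0)=0$. I would carry out the following steps in order.

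\begin{enumerate}
\item Setting $y=0$ in the midpoint identity gives $U(x/2)=U(x)/2$.
\item Combining this with the midpoint identity yields
\[
U(x+y)=2U\Bigl(\tfrac{x+y}{2}\Bigr)=U(x)+U(y),
\]
so $U$ is additive.
\item Additivity gives $U(0)=2U(0)$, and $U(-x)=-U(x)$ follows from $U(x)+U(-x)=U(0)=0$.
\item Induction gives $U(nx)=nU(x)$ for $n\in\N$, and hence for $n\in\Z$ by oddness.
\item For $q=m/n$ with $n\ge1$, we have $nU\bigl(\tfrac mn x\bigr)=U(mx)=mU(x)$. Dividing by $n$ in the vector space $Y$ gives $U(qx)=qU(x)$.
\item Combining additivity with rational homogeneity gives $U(px+qy)=pU(x)+qU(y)$ for all $p,q\in\Q$, which is exactly $\Q$-linearity.
\end{enumerate}

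There is no real obstacle here. The only point requiring care is the normalization in the first step: the halving identity $U(x/2)=U(x)/2$ depends on $U(0)=0$, which is why we pass from $T$ to $U$ before doing anything else.
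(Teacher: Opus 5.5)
Your proof is correct. The paper gives no proof of this lemma and simply cites it as the standard characterization of $\Q$-affine maps via midpoint preservation; your argument is that standard Jensen-equation proof: normalize to $U=T-T(0)$, obtain $U(x/2)=U(x)/2$ and hence additivity, then extend to rational homogeneity.
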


With this, we are ready to prove Lemma \ref{lem:generalized-Mazur-Ulam-Baker}.
\begin{proof}[Proof of Lemma \ref{lem:generalized-Mazur-Ulam-Baker}.]
We essentially follow Baker's proof in \cite{Baker1971}. Let~$g:X\to Z$ and~$f:Y\to Z$ be dyadically homogeneous functions, with~$f$ midpoint injective, and satisfying
$$g(x-y)=f(T(x)-T(y)),\qquad x,y\in X.$$
Fix~$x,y\in X$ and define~$m\deq \frac{x+y}{2}$. Using dyadic homogeneity of~$g$, we get
\begin{equation}\label{eq:mid1}
f(T(x)-T(m))=g\left(x-m\right)=\frac{1}{2}g\left(x-y\right),
\end{equation}
and
\begin{equation}\label{eq:mid2}
f(T(m)-T(y))=g\left(m-y\right)=\frac{1}{2}g\left(x-y\right).
\end{equation}
By combining \eqref{eq:mid1} and \eqref{eq:mid2}, together with dyadic homogeneity of~$f$, we obtain
\begin{align*}
    f(T(x)-T(m))&=f(T(m)-T(y))=\frac{1}{2}g\left(x-y\right)\\
    &=\frac{1}{2}f\left(T(x)-T(y)\right)=f\left(\frac{T(x)-T(y)}{2}\right).
\end{align*}
Since~$f$ is midpoint injective, by Proposition \ref{prop:MI-equiv}, $T(m)=\frac{T(x)+T(y)}{2}$. Thus~$T$ preserves midpoints, and so by Lemma \ref{lem:midpoint-implies-Qaffine}, $T$ is~$\Q$-affine.
\end{proof}

\subsection{Midpoint injectivity and convexity}\label{sub:midpoint_injectivity_and_convexity}

The connection between midpoint injectivity and strict convexity of normed spaces naturally raises the question of how midpoint injectivity interacts with more general notions of convexity. Thus, we now turn our attention to the relationship between midpoint injectivity and convexity of vector-lattice-valued functions.

\begin{definition}
Let~$X$ be a vector space, let~$Y$ be an ordered vector space, and let
$f\colon X\to Y$.

\begin{enumerate}
\item The function~$f$ is called \textit{convex} if
\begin{equation}\label{eq:convex_definition}
f\bigl((1-\lambda)x+\lambda y\bigr)
\leq
(1-\lambda)f(x)+\lambda f(y),
\end{equation}
for every~$x,y\in X$ with~$x\neq y$ and every~$\lambda\in(0,1)$. It is called \textit{strictly convex} if the inequality in \eqref{eq:convex_definition} is strict.

\item If~$Y$ is a vector lattice, then~$f$ is called
\textit{quasiconvex} if
\begin{equation}\label{eq:quasiconvex_definition}
f\bigl((1-\lambda)x+\lambda y\bigr)
\leq
f(x)\vee f(y)
\end{equation}
for every~$x,y\in X$ with~$x\neq y$ and every~$\lambda\in(0,1)$. It is called \textit{strictly quasiconvex} if the inequality in \eqref{eq:quasiconvex_definition} is strict.
\end{enumerate}
\end{definition}
We note that convex and quasiconvex functions with values in vector lattices have been studied in the theories of convex operators and vector optimization; see, for example, the monograph \cite{KusraevKutateladze1995} as well as \cite{MartinezLegaz2012}. Our treatment of these notions, however, is self-contained.

When the codomain is a vector lattice, it is easy to see that every strictly convex function is strictly quasiconvex, and every convex function is quasiconvex. The following gives an example of a convex map on a vector lattice that fails to be strictly quasiconvex. We will refer to it later.

\begin{example}\label{ex:positive-part-function}
    Let~$X$ be a vector lattice. Consider the function~$x\mapsto x^+$ on~$X$. Recalling the identity~$x^+=\frac{1}{2}(|x|+x)$, we have
  \begin{align*}
  (\lambda x+(1-\lambda)y)^+&=\frac{1}{2}\left[|\lambda x+(1-\lambda)y|+\lambda x+(1-\lambda)y\right]\\
  &\leq \lambda \cdot \frac{1}{2}\left(|x|+x\right)+(1-\lambda) \cdot \frac{1}{2}\left(|y|+y\right)\\
  &=\lambda x^++(1-\lambda)y^+,
  \end{align*}
  so~$x\mapsto x^+$ is convex, and hence quasiconvex. On the other hand, let~$x,y<0$ be distinct. Then~$x^+=y^+=0$, and so for any~$\lambda \in (0,1)$ we have
  $$(\lambda x+(1-\lambda) y)^+=0=x^+\vee y^+.$$
  Thus, $x\mapsto x^+$ is not strictly quasiconvex.
\end{example}

Example \ref{ex:positive-part-function} also shows that quasiconvexity alone does not imply midpoint injectivity. Indeed, it is easy to observe that the map~$x\mapsto x^+$ is not midpoint injective; taking any~$x,y<0$ with~$x\neq y$ gives~$x^+=y^+=\big(\frac{x+y}{2}\big)^+=0$. The next lemma shows that under a stronger assumption of strict quasiconvexity, midpoint injectivity always follows.
\begin{lemma}\label{lem:strictly-quasiconvex-implies-MI}
    Let~$X$ be a vector space and let~$Y$ be a vector lattice. If~$f\colon X\to Y$ is strictly quasiconvex, then~$f$ is midpoint injective.
\end{lemma}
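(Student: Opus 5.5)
We argue by contraposition: assume $x\neq y$ and $f(x)=f(y)=f\bigl(\frac{x+y}{2}\bigr)$, and derive a contradiction from strict quasiconvexity. No earlier result beyond the definitions is needed. In particular, we do not need Proposition~\ref{prop:MI-equiv}, since the defining condition \eqref{eq:midpoint_injective} is already in the most convenient form.

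Concretely, suppose $x,y\in X$ satisfy $f(x)=f(y)=f\bigl(\frac{x+y}{2}\bigr)$, and suppose toward a contradiction that $x\neq y$. Apply the strict version of \eqref{eq:quasiconvex_definition} with $\lambda=\tfrac12$. This is legitimate precisely because $x\neq y$, which is the only case the definition covers. We obtain
\[
f\left(\frac{x+y}{2}\right) < f(x)\vee f(y).
\]
Since $f(x)=f(y)$, the right-hand side equals $f(x)\vee f(x)=f(x)$. Therefore $f\bigl(\frac{x+y}{2}\bigr)<f(x)$.

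On the other hand, the hypothesis gives $f\bigl(\frac{x+y}{2}\bigr)=f(x)$. In a partially ordered set, $a<b$ means $a\leq b$ and $a\neq b$, so $a<a$ is impossible. This is the desired contradiction, hence $x=y$ and $f$ is midpoint injective.

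There is no real obstacle. The only point requiring care is to read ``strict inequality'' in the definition as $\leq$ together with $\neq$, the standard meaning of $<$ in an ordered vector space, rather than as some stronger notion. Under that reading the argument goes through verbatim, and any stronger notion of strictness would only make the contradiction easier.
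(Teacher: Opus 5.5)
Your proof is correct and matches the paper's argument: with $\lambda=\tfrac12$ and $x\neq y$, strict quasiconvexity gives $f\bigl(\frac{x+y}{2}\bigr)<f(x)\vee f(y)$. This contradicts the hypothesis $f\bigl(\frac{x+y}{2}\bigr)=f(x)=f(x)\vee f(y)$, so $x=y$.
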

\begin{proof}
    Suppose that for some~$x,y\in X$ we have
        $$f(x)=f(y)=f\left(\frac{x+y}{2}\right).$$
    This gives~$f\left(\frac{x+y}{2}\right)=f(x)\vee f(y)$. If~$x\neq y$, then strict quasiconvexity implies that~$f\left(\frac{x+y}{2}\right)<f(x)\vee f(y)$, giving a contradiction. Hence~$x=y$, so~$f$ is midpoint injective.
\end{proof}

It is natural to ask whether the converse to Lemma \ref{lem:strictly-quasiconvex-implies-MI} holds. It is not true in general even on~$\R$, as there exist injective (hence midpoint injective) non-quasiconvex functions. Nevertheless, the following statement shows that, within the class of convex functions, the concepts of midpoint injectivity and strict quasiconvexity coincide.

\begin{prop}
\label{prop:MI-convex}
Let~$X$ be a vector space, let~$Y$ be a vector lattice, and let
$f\colon X\to Y$ be convex. Then the following statements are
equivalent:
\begin{enumerate}
\item\label{item:MI-convex-MI}~$f$ is midpoint injective;

\item\label{item:MI-convex-SQC}~$f$ is strictly quasiconvex;

\item\label{item:MI-convex-CI} for all~$x,y\in X$ and every~$\lambda\in(0,1)$,
$$
f(x)=f(y)
=
f\bigl((1-\lambda)x+\lambda y\bigr)
\quad\Longrightarrow\quad
x=y;
$$

\item\label{item:MI-convex-zero} for all~$x,y\in X$ and all distinct nonzero~$a,b\in\R$,
$$
f(x+ay)=f(x)=f(x+by)
\quad\Longrightarrow\quad
y=0.
$$
\end{enumerate}
\end{prop}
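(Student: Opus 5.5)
The plan is to prove the cycle (\ref*{item:MI-convex-SQC})$\Rightarrow$(\ref*{item:MI-convex-CI})$\Rightarrow$(\ref*{item:MI-convex-zero})$\Rightarrow$(\ref*{item:MI-convex-MI})$\Rightarrow$(\ref*{item:MI-convex-SQC}). Only the last implication needs convexity. Note that (\ref*{item:MI-convex-SQC})$\Rightarrow$(\ref*{item:MI-convex-MI}) is already Lemma~\ref{lem:strictly-quasiconvex-implies-MI}.

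\emph{(\ref*{item:MI-convex-SQC})$\Rightarrow$(\ref*{item:MI-convex-CI}).} Suppose $f(x)=f(y)=f((1-\lambda)x+\lambda y)$. Then $f((1-\lambda)x+\lambda y)=f(x)\vee f(y)$. If $x\neq y$, this contradicts strict quasiconvexity, so $x=y$.

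\emph{(\ref*{item:MI-convex-CI})$\Rightarrow$(\ref*{item:MI-convex-zero}).} Suppose $f(x+ay)=f(x)=f(x+by)$ and $y\neq 0$. The three reals $a,0,b$ are distinct, so one of them lies strictly between the other two. The corresponding point is then a convex combination, with some $\lambda\in(0,1)$, of the other two points on the line $x+\R y$. All three points have the same $f$-value, so (\ref*{item:MI-convex-CI}) forces the two outer points to coincide. Their difference is a nonzero scalar multiple of $y$, so $y=0$, a contradiction.

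\emph{(\ref*{item:MI-convex-zero})$\Rightarrow$(\ref*{item:MI-convex-MI}).} Suppose $f(x)=f(y)=f(m)$ with $m=\frac{x+y}{2}$. Put $d\deq x-m$, so that $y=m-d$. Applying (\ref*{item:MI-convex-zero}) with base point $m$, direction $d$, $a=1$ and $b=-1$ gives $d=0$, hence $x=y$.

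\emph{(\ref*{item:MI-convex-MI})$\Rightarrow$(\ref*{item:MI-convex-SQC}).} This is the main step, and it is where convexity enters. Assume $f$ is midpoint injective but not strictly quasiconvex. Then there are $x\neq y$ and $\lambda\in(0,1)$ such that, with $z=(1-\lambda)x+\lambda y$ and $M\deq f(x)\vee f(y)$, we have $f(z)\le M$ (by convexity) but $f(z)\neq M$ fails, i.e. $f(z)=M$.

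First I show $f(x)=f(y)=M$. By convexity,
\[
M=f(z)\le(1-\lambda)f(x)+\lambda f(y),
\]
so $(1-\lambda)(M-f(x))+\lambda(M-f(y))\le 0$. Both terms are positive vectors, and a sum of positive vectors that is $\le 0$ must have each term equal to $0$. Hence $f(x)=f(y)=M$.

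Next I show that $\varphi(t)\deq f(x+t(y-x))$ is constant equal to $M$ on $[0,1]$. The function $\varphi$ is convex and equals $M$ at $0$, $\lambda$ and $1$. For $t\in(0,\lambda)$, convexity between $0$ and $1$ gives $\varphi(t)\le M$. Also, $\lambda=\alpha t+(1-\alpha)\cdot 1$ for some $\alpha\in(0,1)$. Convexity then gives $M\le\alpha\varphi(t)+(1-\alpha)M$, hence $\varphi(t)\ge M$. The case $t\in(\lambda,1)$ is symmetric, interpolating between $0$ and $t$ instead.

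In particular $f(x)=f(y)=f\bigl(\frac{x+y}{2}\bigr)=M$. Midpoint injectivity then gives $x=y$, a contradiction. The only delicate point is the order-theoretic bookkeeping: "not strict" must be read as the equality $f(z)=M$, and equality is then propagated using positivity of the cone.
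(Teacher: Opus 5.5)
Your proof is correct, but it is organized differently from the paper's.

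The paper's structure is as follows:
\begin{itemize}
\item (ii)$\Rightarrow$(i) by Lemma~\ref{lem:strictly-quasiconvex-implies-MI};
\item (i)$\Rightarrow$(iii) and (iii)$\Rightarrow$(ii), both of which use convexity;
\item (iii)$\Rightarrow$(iv)$\Rightarrow$(i) to close the loop, the last step going through Proposition~\ref{prop:MI-equiv}(iii) with $a=1$, $b=2$.
\end{itemize}
You instead run a single cycle. In it, (ii)$\Rightarrow$(iii)$\Rightarrow$(iv)$\Rightarrow$(i) hold for arbitrary $f$, and all of the convexity is concentrated in (i)$\Rightarrow$(ii).

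The ingredients of that step are the same as those of the paper's two convexity steps.
\begin{itemize}
\item Your positivity argument forcing $f(x)=f(y)=M$ is the paper's case $f(x)\neq f(y)$ in (iii)$\Rightarrow$(ii).
\item Your propagation of the value $M$ along the segment replaces the paper's (i)$\Rightarrow$(iii). There the paper assumes without loss of generality that the point is closer to $y$, writes it as a convex combination of $m=\frac{x+y}{2}$ and $y$, and gets $f(m)=f(y)$ from a single application of convexity. You only need $t=\frac12$, so showing that $\varphi$ is constant on all of $[0,1]$ is more than required, though harmless.
\end{itemize}
Your (iv)$\Rightarrow$(i), with base point $m$ and $a=1$, $b=-1$, is also more direct than the paper's, since it needs no appeal to Proposition~\ref{prop:MI-equiv}.

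What your arrangement buys is transparency. It shows that the chain (ii)$\Rightarrow$(iii)$\Rightarrow$(iv)$\Rightarrow$(i) is purely formal, reproving Lemma~\ref{lem:strictly-quasiconvex-implies-MI} along the way. Convexity is then needed only to recover strict quasiconvexity from midpoint injectivity. The paper's arrangement instead records (i)$\Rightarrow$(iii) for convex $f$ as a standalone implication. It also proves (iii)$\Rightarrow$(ii) directly by a case split rather than by contradiction.
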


\begin{proof}
\noindent
\textit{(\ref*{item:MI-convex-SQC})$\implies$(\ref*{item:MI-convex-MI})}. Follows from Lemma \ref{lem:strictly-quasiconvex-implies-MI}.

\smallskip
\noindent
\textit{(\ref*{item:MI-convex-MI})$\implies$(\ref*{item:MI-convex-CI})}. Assume that for some~$x,y\in X$ and~$\lambda \in (0,1)$ we have
$$f(x)=f(y)=f(\lambda x+(1-\lambda)y).$$
Without loss of generality, assume that~$\lambda \leq \frac{1}{2}$; otherwise interchange~$x$ and~$y$ and replace~$\lambda$ with~$1-\lambda$. Let~$m=\frac{x+y}{2}$. As~$f$ is convex, we have $$f(m)\leq \frac{f(x)+f(y)}{2}=f(y).$$
Note that
\begin{align*}
\lambda x+(1-\lambda)y&=\lambda x+\lambda y - \lambda y+(1-\lambda)y=2\lambda m
+(1-2\lambda)y.
\end{align*}
Because~$\lambda \leq \frac{1}{2}$, this shows that the former is a convex combination of~$m$ and~$y$. Using convexity of~$f$ one more time, we obtain
\begin{align*}
f(y)&=f(\lambda x+(1-\lambda)y)=f(2\lambda m +(1-2\lambda)y)\\
&\leq 2\lambda f(m)+(1-2\lambda)f(y)\leq 2\lambda f(y)+(1-2\lambda)f(y)\\
&=f(y).
\end{align*}
Thus, we must have~$f(m)=f(y)$, which implies
$$f(x)=f(y)=f\left(\frac{x+y}{2}\right).$$
Because~$f$ is midpoint injective, we then have~$x=y$.

\smallskip
\noindent
\textit{(\ref*{item:MI-convex-CI})$\implies$(\ref*{item:MI-convex-SQC})}. Let~$x,y\in X$ with~$x\neq y$ and let~$\lambda \in (0,1)$. If~$f(x)\neq f(y)$, then by convexity we have
\begin{align*}
f(\lambda x +(1-\lambda) y)&\leq \lambda f(x)+(1-\lambda)f(y)\\
&< \lambda[f(x)\vee f(y)]+(1-\lambda) [f(x)\vee f(y)]\\
&=f(x)\vee f(y).
\end{align*}
On the other hand, if~$f(x)=f(y)$, then we have
\begin{equation}\label{eq:fx-equals-fy}
f(\lambda x +(1-\lambda) y)\leq \lambda f(x)+(1-\lambda) f(y)=f(x)\vee f(y).
\end{equation}
It remains to show that the inequality in \eqref{eq:fx-equals-fy} is strict. To this end, assume that \eqref{eq:fx-equals-fy} is an equality. We would then have
$$f(x)=f(y)=f(x)\vee f(y)=f(\lambda x +(1-\lambda) y),$$
which by (\ref*{item:MI-convex-CI}) would imply that~$x=y$, giving a contradiction.

\smallskip
\noindent
\textit{(\ref*{item:MI-convex-CI})$\implies$(\ref*{item:MI-convex-zero})}. Assume that for some~$x,y\in X$ and~$a,b\in \R\setminus \{0\}$ with~$a\neq b$
\begin{equation}\label{eq:convex-zero}
  f(x+ay)=f(x)=f(x+by).
\end{equation}
By way of contradiction, suppose that~$y\neq 0$. Since~$0,a,b$ are three distinct real numbers, one lies strictly between the other two. Hence, one of the three points~$x+ay$, $x$, and~$x+by$ is a nontrivial convex combination of the other two. Combining this with \eqref{eq:convex-zero}, we see that assumption (\ref*{item:MI-convex-CI}) implies that the two outer points coincide. Since their corresponding parameters are distinct, this forces~$y=0$, a contradiction.

\smallskip
\noindent
\textit{(\ref*{item:MI-convex-zero})$\implies$(\ref*{item:MI-convex-MI})}. Assume that for some~$x,y\in X$ we have
$$f(x)=f(y)=f\left(2x-y\right).$$
By Proposition \ref{prop:MI-equiv} (\ref*{item:MI-equiv-MI-convenient}), it suffices to prove that~$x=y$.

Let~$u\deq 2x-y$ and~$v\deq y-x$. Then we have~$x=u+v$, $y=u+2v$, and
$$2x-y=2u+2v-u-2v=u.$$
Thus we have
$$f(u+v)=f(u+2v)=f(u).$$
Applying (\ref*{item:MI-convex-zero}) with~$a=1$ and~$b=2$
gives~$v=0$, and hence~$x=y$.
\end{proof}

We conclude this section by investigating the convexity assumption in Proposition \ref{prop:MI-convex}. The following proposition shows that convexity may be replaced by quasiconvexity for functions taking values in a totally-ordered vector lattice (in which every two vectors are comparable). In particular, for real-valued quasiconvex functions, midpoint injectivity is equivalent to strict quasiconvexity.

\begin{prop}\label{prop:MI-totally-ordered}
Let~$X$ be a vector space, let~$Y$ be a totally ordered vector lattice, and let~$f:X\to Y$ be quasiconvex. Then~$f$ is midpoint injective if and only if~$f$ is strictly quasiconvex.
\end{prop}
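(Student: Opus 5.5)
One direction is already available: strict quasiconvexity implies midpoint injectivity by Lemma~\ref{lem:strictly-quasiconvex-implies-MI}, and total ordering is not needed for it. The work is the converse, so assume $f$ is quasiconvex and midpoint injective. We must show that for distinct $x,y\in X$ and $\lambda\in(0,1)$,
\[
f\bigl((1-\lambda)x+\lambda y\bigr)<f(x)\vee f(y).
\]
Since $Y$ is totally ordered, $f(x)\vee f(y)$ equals $f(x)$ or $f(y)$; by symmetry we may assume it is $f(y)$, so $f(x)\le f(y)$. Quasiconvexity already gives $\le$, so we argue by contradiction and suppose that $z\deq(1-\lambda)x+\lambda y$ satisfies $f(z)=f(y)=:M$. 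The aim is to manufacture three points in midpoint configuration, all with value $M$, and then invoke midpoint injectivity.

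\textbf{Step 1: the whole segment $[z,y]$ takes the value $M$.} Every point $w$ of the segment $[x,y]$ satisfies $f(w)\le M$ by quasiconvexity. Now take $w$ strictly between $z$ and $y$. Then $z$ lies strictly between $x$ and $w$, so quasiconvexity gives
\[
M=f(z)\le f(x)\vee f(w).
\]
Total ordering is used here: the maximum $f(x)\vee f(w)$ is one of $f(x)$ or $f(w)$.
\begin{itemize}
\item If $f(w)\ge f(x)$, the maximum is $f(w)$, so $M\le f(w)\le M$ and hence $f(w)=M$.
\item If $f(w)<f(x)$, the maximum is $f(x)$, so $M\le f(x)\le M$ and hence $f(x)=M$.
\end{itemize}

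\textbf{Step 2: conclude in each case.} If $f(w)=M$ for every $w$ in $[z,y]$, pick two distinct points $p,q$ of this segment. Their midpoint also lies in $[z,y]$, so
\[
f(p)=f(q)=f\Bigl(\frac{p+q}{2}\Bigr)=M,
\]
and midpoint injectivity forces $p=q$, a contradiction. Otherwise some $w$ falls in the second case of Step~1, giving $f(x)=M$. Then $f(x)=f(y)=M$, and the midpoint $m=\frac{x+y}{2}$ satisfies $f(m)\le M$. To show $f(m)=M$, apply Step~1 with the roles of $x$ and $y$ swapped, which is legitimate since now $f(x)=f(y)$. This shows that $f$ equals $M$ on the segment from $z$ to $x$, except possibly where the alternative occurs. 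But the alternative there would give $f(y)=M$, which we already know, so the alternative offers no escape.

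This is the main obstacle. The reasoning has to be rearranged so that the dichotomy yields a genuine conclusion. The cleaner route is: once $f(x)=f(y)=f(z)=M$ with $z$ strictly between $x$ and $y$, show that $f\equiv M$ on $[x,y]$. For any $w\in(x,z)$, quasiconvexity on the segment $[w,y]\ni z$ gives $M\le f(w)\vee f(y)$. This yields no information on its own, since $f(y)=M$ already. Instead we use segments whose other endpoint has value below $M$. Suppose some $w\in[x,y]$ had $f(w)<M$, say with $w$ on the same side of $z$ as $y$. Then $z$ lies between $x$ and $w$, and quasiconvexity gives $f(z)\le f(x)\vee f(w)=M$, which is also uninformative. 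So we work directly with midpoints instead: choose the triple $x,\,y,\,\frac{x+y}{2}$, or a dyadic refinement of it, around $z$.

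I would first try to prove that the set $\{w\in[x,y]: f(w)=M\}$ contains some point $w$ together with two points symmetric about it, by repeatedly using the case analysis of Step~1 on subsegments. I expect this bookkeeping to be where the real difficulty lies.
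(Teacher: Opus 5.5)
Your treatment of the case $f(x)<f(y)$ is correct, and it is the same argument the paper uses for its auxiliary claim that $g(a)=g(b)>g(c)$ with $a<b<c$ cannot occur. With $w=\frac{z+y}{2}$, the point $z$ lies strictly between $x$ and $w$, so $M\le f(x)\vee f(w)$. Total ordering and $f(x)<M$ force $f(w)\ge M$, and $f(w)\le M$ by quasiconvexity, so $f(w)=M$. Then $f(z)=f(y)=f(\frac{z+y}{2})$ contradicts midpoint injectivity.

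However, the proof is not complete: the case $f(x)=f(y)=f(z)=M$ is left open, and neither route you sketch closes it. Trying to show $f\equiv M$ on $[x,y]$, or to find a symmetric triple of points of value $M$ by iterating Step~1, cannot work as stated. The dichotomy in Step~1 is vacuous once both endpoints already have value $M$. The argument needs an endpoint whose value is \emph{strictly below} $M$.

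Midpoint injectivity supplies that point directly. Put $m=\frac{x+y}{2}$. Quasiconvexity gives $f(m)\le f(x)\vee f(y)=M$, and $f(m)=M$ would give $f(x)=f(y)=f(m)$, hence $x=y$; so $f(m)<M$.
\begin{itemize}
\item If $\lambda=\frac12$, then $z=m$, and $f(z)<M$ contradicts $f(z)=M$.
\item Otherwise $z$ lies strictly inside one of the half-segments, say strictly between $x$ and $m$ (the other case is symmetric). The points $m$, $z$, $x$ are then in the configuration you have already settled: $f(m)<f(x)=M=f(z)$ with $z$ strictly between $m$ and $x$. Running your Step~1--Step~2 argument with $m$ in the role of $x$ and $x$ in the role of $y$ gives the contradiction.
\end{itemize}
This is how the paper concludes. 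It isolates the unequal case as a claim about $g=f\circ\gamma$ and reduces the equal case to it using $g(\frac12)<g(0)=g(1)$.
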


\begin{proof}
    If~$f$ is strictly quasiconvex, Lemma \ref{lem:strictly-quasiconvex-implies-MI} implies that it is midpoint injective. Thus it suffices to prove the other implication.

    We begin with the following preliminary observation. Let~$f:X\to Y$ be quasiconvex and midpoint injective and fix~$x,y\in X$ with~$x\neq y$. Define
    $$\gamma:[0,1]\to X, \qquad \gamma(t)=tx+(1-t)y.$$
    Since~$x\neq y$, $\gamma$ is injective. Put~$g:=f\circ \gamma$. We note that since~$f$ is quasiconvex and~$\gamma$ is affine, it follows that~$g$ is also quasiconvex.

    We claim the following: for a fixed~$0\leq a<b<c\leq 1$, neither~$g(a)=g(b)>g(c)$ nor~$g(a)<g(b)=g(c)$ can occur. Indeed, suppose that~$g(a)=g(b)>g(c)$ and set~$m=\frac{a+b}{2}$. By quasiconvexity we get~$g(m)\leq g(a)$. Since~$b$ lies between~$m$ and~$c$, quasiconvexity also gives~$g(b)\leq g(m)\vee g(c)$. Using the fact that~$Y$ is totally ordered and~$g(c)<g(b)$, it follows that~$g(b)\leq g(m)$, as otherwise we would have~$g(m)\vee g(c)<g(b)$. Hence~$g(a)=g(m)=g(b).$ Since~$\gamma(m)=\frac{\gamma(a)+\gamma(b)}{2}$ and since~$f$ is midpoint injective, this gives~$\gamma(a) = \gamma(b)$. From injectivity of~$\gamma$, this gives~$a=b$, which contradicts the assumption. The second case is analogous.

    We are now ready to prove the proposition. By way of contradiction, suppose that~$f$ is not strictly quasiconvex. Then there exist~$\lambda\in (0,1)$ and~$x,y\in X$ such that~$x\neq y$ and
    $$f(\lambda x+(1-\lambda)y) = f(x)\vee f(y).$$
    Using the function~$g=f\circ \gamma$ defined above together with the claim, we will now conclude the proof. As~$Y$ is totally ordered, there are three cases to consider.

    If~$f(x)<f(y)$, then~$g(0)=g(\lambda)>g(1)$, contradicting the claim. Similarly, if~$f(x)>f(y)$, then~$g(0)<g(\lambda)=g(1)$, which also contradicts the claim.

    Suppose now~$f(x)=f(y)$. Since~$x\neq y$, quasiconvexity and midpoint injectivity imply that
    $$g\left(\frac12\right)<g(0)=g(1).$$
    If~$\lambda=\frac12$, this already contradicts~$g(\lambda)=g(0)$. If~$\lambda<\frac12$ then
    $$g(0)=g(\lambda)>g\left(\frac12\right),$$
    while if~$\lambda>\frac12$, then
    $$g\left(\frac12\right)<g(\lambda)=g(1).$$
    Either case contradicts the claim. Therefore~$f$ is strictly quasiconvex.
\end{proof}

It is natural to ask whether one might always be able to replace convexity with quasiconvexity in Proposition \ref{prop:MI-convex}. Our final example of this section shows that this is not the case for functions taking values in a partially ordered vector lattice. In particular, this demonstrates that midpoint injectivity is more subtle for vector-lattice-valued functions, where phenomena arise that have no counterpart in the classical real-valued setting.

\begin{example}
Consider~$\R^2$ with the coordinate-wise order and let~$f:\R\to \R^2$ be defined by~$f(x)=-(x^+,x^-)$. As~$f$ is injective, it is midpoint injective. Moreover, the functions~$x\mapsto -(x^+)$ and~$x\mapsto -(x^-)$ are both quasiconvex, since they are non-increasing and non-decreasing, respectively. Hence, with respect to the coordinate-wise order on~$\R^2$, the map~$f$ must be quasiconvex as well.

Now consider~$x=1$ and~$y=-1$. Then~$f(x)=(-1,0)$ and~$f(y)=(0,-1)$, so~$f(x)\vee f(y)=(0,0)$. We also have~$f(\frac{x+y}{2})=f(0)=(0,0)$, so~$f$ is not strictly quasiconvex.
\end{example}

\section{Modulus equations in vector lattices}
\label{sec:mazur_ulam_baker}

We now apply Lemma \ref{lem:generalized-Mazur-Ulam-Baker} to the lattice modulus. Recall that, in a vector lattice~$X$, two vectors~$x,y\in X$ are called disjoint, written~$x\perp y$, if~$|x|\wedge |y|=0$. Disjointness plays an important role in this section; hence we recall some of the known basic properties that will be used throughout.

We begin with the standard identity (see \cite[Theorem 1.7 (5)]{AliprantisBurkinshaw2006}) that for any~$x,y\in X$ we have
\begin{equation}\label{eq:disjointness-fact}
  |x+y|=|x-y|\iff x\perp y.
\end{equation}
Combining this identity with \cite[Theorem 1.7 (7)]{AliprantisBurkinshaw2006} and \cite[Theorem 1.7 (4)]{AliprantisBurkinshaw2006}, we obtain
\begin{equation}\label{eq:disjointness-fact2}
|x+y|=|x|+|y|=|x|\vee |y|,
\end{equation}
whenever~$x\perp y$.

Before we turn our attention to applications of Lemma \ref{lem:generalized-Mazur-Ulam-Baker} to general vector lattices, we first establish the following key fact.
\begin{lemma}
\label{lem:modulus_is_midpoint_injective}
Let~$X$ be a vector lattice. Then the modulus map
$$
x\longmapsto |x|
$$
is midpoint injective.
\end{lemma}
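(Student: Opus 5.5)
We will verify condition (\ref*{item:MI-equiv-MI-convenient}) of Proposition~\ref{prop:MI-equiv}, since it has the most convenient form. Suppose $w,z\in X$ satisfy $|w|=|z|=|2w-z|$. The goal is to show $w=z$. Put $d\deq w-z$, so that $2w-z=w+d$ and $z=w-d$. The hypothesis $|w+d|=|w-d|$ gives, by the identity \eqref{eq:disjointness-fact}, that $w\perp d$.

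Next we use the remaining hypothesis $|w|=|w-d|$. Since $w\perp d$, the identity \eqref{eq:disjointness-fact2} applies to the pair $w$ and $-d$, which are still disjoint. This yields
\[
|w-d|=|w|+|d|.
\]
Combining this with $|w-d|=|w|$, we obtain $|d|=0$. Hence $d=0$, that is, $w=z$, and the modulus is midpoint injective.

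The only delicate point is choosing the substitution so that both known disjointness facts apply directly. In particular, we must note that $w\perp d$ implies $w\perp(-d)$, which holds because $|-d|=|d|$. With the substitution $d=w-z$ this works cleanly, so we expect no real obstacle.
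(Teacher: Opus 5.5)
Your proof is correct and is essentially the paper's argument after the change of variables $w=\frac{x+y}{2}$, $z=y$ (so $d=\frac{x-y}{2}$). The paper derives the same disjointness $(x+y)\perp(x-y)$ from $|x|=|y|$ via the identity $|x+y|\wedge|x-y|=\bigl||x|-|y|\bigr|$, whereas you obtain $w\perp d$ from \eqref{eq:disjointness-fact}; both arguments then finish with the additivity \eqref{eq:disjointness-fact2} of the modulus on disjoint vectors.
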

\begin{proof}
Let~$x,y\in X$ be such that~$|x|=|y|=\frac{|x+y|}{2}$.
We make use of the following standard identity (see \cite[Theorem 1.7 (6)]{AliprantisBurkinshaw2006})
$$|x+y|\wedge |x-y|=\left||x|-|y|\right|.$$
As~$|x|=|y|$, this implies that~$x+y\perp x-y$. Thus \eqref{eq:disjointness-fact2} gives us
$$2|x|+|x-y|=|x+y|+|x-y|=|x+y+x-y|=2|x|,$$
so~$|x-y|=0$, and hence~$x=y$.
\end{proof}

\subsection{Translation-invariant modulus equations}

Since the modulus is midpoint injective and dyadically homogeneous, it provides another fundamental class of functions to which Lemma \ref{lem:generalized-Mazur-Ulam-Baker} applies. Our next goal is to investigate a general functional equation from which Theorem \ref{thm:moduli_isometry} will follow.

\begin{theorem}
\label{thm:general_equation_is_Q_linear}
Let~$X$ and~$Y$ be vector lattices and let~$T\colon X\to Y$ be a map.
Suppose that there exists a function~$g\colon X\to Y_+$ such that
\[
|T(x)-T(y)|=g(x-y),
\qquad x,y\in X.
\]
Then~$T$ is~$\Q$-affine. Furthermore, if
$$g(|x|)=g(x), \qquad x\in X,$$
then~$T-T(0)$ is disjointness preserving.
\end{theorem}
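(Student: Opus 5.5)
The plan is to reduce the first assertion to Lemma~\ref{lem:generalized-Mazur-Ulam-Baker}. I would take $Z=Y$ and $f(y)=|y|$, which is dyadically homogeneous and, by Lemma~\ref{lem:modulus_is_midpoint_injective}, midpoint injective. What remains is the key observation from the introduction: $g$ is \emph{forced} to be dyadically homogeneous. Two facts come for free:
\begin{itemize}
\item taking $x=y$ gives $g(0)=0$;
\item symmetry of the modulus gives $g(-z)=g(z)$.
\end{itemize}
It therefore suffices to prove $g(2z)=2g(z)$ for all $z\in X$. The case $2^{-k}$ then follows by applying this to $2^{-k}z$, and the case $2^k$ by iteration.

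To prove $g(2z)=2g(z)$, I would restrict $T$ to the arithmetic progression $h(n)\deq T(nz)$, $n\in\Z$, and set $d_n\deq h(n+1)-h(n)$. The hypothesis gives:
\begin{itemize}
\item $|d_n|=g(z)\eqqcolon c$ for all $n$;
\item $|d_n+d_{n+1}|=g(2z)\eqqcolon e$ for all $n$;
\item more generally, $\bigl|\sum_{j=n}^{n+N-1}d_j\bigr|=g(Nz)$ for all $n$ and $N$.
\end{itemize}
Since $|d_n|=|d_{n+1}|$, the identity $|u+v|\wedge|u-v|=\bigl||u|-|v|\bigr|$ shows $d_n+d_{n+1}\perp d_n-d_{n+1}$. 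Then \eqref{eq:disjointness-fact2} yields $e+|d_n-d_{n+1}|=2c$. Thus $r\deq|d_n-d_{n+1}|=2c-e$ is independent of $n$, and we can write $d_{n+1}=d_n-q_n$ with $|q_n|=r$. Here $q_n$ is obtained from $d_n$ by flipping sign on a band.

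The goal is to show $r=0$. I would do this by comparing the expressions for $g(2^k z)$ computed in two ways. The first way applies the same analysis to the progression with step $2z$, whose increments are $p_n=d_n+d_{n+1}$. The second way expands $\sum_j d_j$ using the flip structure. For example, one can compare $|d_0+d_1+d_2+d_3|$ with the constraints $|p_0|=|p_2|=e$ and $|p_0+p_2|=g(4z)$, where $g(4z)$ must equal $2e-r'$ for the analogous defect $r'$. The aim is an inequality such as $g(2^kz)\ge 2^k c-(\text{something bounded})\cdot r$ together with an upper bound forcing $r\le 2^{-k}\cdot(\text{const})$. This gives $r=0$, hence $e=2c$.

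This step, showing that the ``defect'' $|d_n-d_{n+1}|$ vanishes, is the main obstacle. The first thing I would try is to exploit that the same constant $r$ appears at every position $n$ and at every scale, and to derive a contradiction with the archimedean-free structure using only disjointness identities. Once $g(2z)=2g(z)$ is established, Lemma~\ref{lem:generalized-Mazur-Ulam-Baker} gives that $T$ is $\Q$-affine.

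For the second assertion, set $U\deq T-T(0)$, which is $\Q$-linear and satisfies $|U(x)-U(y)|=g(x-y)$. Let $x\perp y$. By \eqref{eq:disjointness-fact} we have $|x-y|=|x+y|$, so the assumption $g(|w|)=g(w)$ gives
\[
|U(x)-U(y)|=g(x-y)=g(|x-y|)=g(|x+y|)=g(x+y)=g\bigl(x-(-y)\bigr)=|U(x)-U(-y)|=|U(x)+U(y)|,
\]
where the last step uses $U(-y)=-U(y)$ by $\Q$-linearity. Applying \eqref{eq:disjointness-fact} in $Y$ then yields $U(x)\perp U(y)$, so $U$ is disjointness preserving.
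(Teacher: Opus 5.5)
There is a genuine gap: the step you flag yourself, that the defect $|d_0-d_1|$ vanishes, is never proved, and the route you sketch cannot prove it. Your preliminary reductions are correct: $g(0)=0$, $g(-z)=g(z)$, and the reduction to $g(2z)=2g(z)$. So are the relations $|d_n|=c$, $|d_n+d_{n+1}|=e$, $(d_n+d_{n+1})\perp(d_n-d_{n+1})$ and $e+|d_n-d_{n+1}|=2c$. Your proof of the disjointness-preserving part is also correct and essentially the paper's.

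The problem is that your route only uses the values $T(nz)$, $n\in\Z$, and these do not force $g(2z)=2g(z)$. Take $Y=\R^2$ with the coordinatewise order, and let $h(n)=(n,\varepsilon_n)$, where $\varepsilon_k=1$ for odd $k$ and $\varepsilon_k=0$ for even $k$. Then $|h(m)-h(n)|=(|m-n|,\varepsilon_{m-n})$ depends only on $m-n$, so every relation you list holds at every scale. Yet $d_0=(1,1)$, $d_1=(1,-1)$, $|d_0-d_1|=(0,2)\neq 0$, and $G(2)=(2,0)\neq 2G(1)$. Here an inequality $g(2^kz)\geq 2^kc-Mr$ would need $M\geq 2^{k-1}$. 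Independently, deducing $r=0$ from bounds $r\leq 2^{-k}C$ for all $k$ requires $Y$ to be Archimedean, which is not assumed (think of $\Rlex$).

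What a single progression \emph{does} force is that the two-step increments $p_n=d_n+d_{n+1}$ are all equal; in the example, $p_n=(2,0)$. This is exactly the induction in the paper's proof.
\begin{itemize}
\item Write $d_0=\tfrac12p_0+\rho$ and $d_1=\tfrac12p_0-\rho$ with $\rho=\tfrac12(d_0-d_1)$. Your disjointness fact gives $\rho\perp p_0$, so $c=|d_0|=\tfrac12e+|\rho|$.
\item If $p_0=\dots=p_{n-1}$, then $d_n=\tfrac12p_0\pm\rho$, hence $d_{n+1}=p_n-\tfrac12p_0\mp\rho$.
\item Since $|p_n|=e=|p_0|$, also $\rho\perp p_n$, and \eqref{eq:disjointness-fact2} gives $c=|p_n-\tfrac12p_0|+|\rho|$.
\item Cancelling $|\rho|$ yields $|p_n-\tfrac12p_0|=\tfrac12|p_0|=\tfrac12|p_n|$. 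Proposition~\ref{prop:MI-equiv}(iii) for the modulus, applied to $\tfrac12p_n$ and $\tfrac12p_0$, gives $p_n=p_0$.
\end{itemize}
In particular, $U(4z)=p_0+p_2=2U(2z)$ for $U=T-T(0)$. Since $z$ is arbitrary, $U(2x)=2U(x)$, and hence $g(2x)=|U(2x)|=2g(x)$ for all $x$. So the defect at step $x$ is removed by running the argument on the progression with step $x/2$, not by going up to the multiples $2^kx$. With this replacement, your reduction to Lemma~\ref{lem:generalized-Mazur-Ulam-Baker} goes through.
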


Before proving the theorem, we reduce it to a normalized equation. Suppose~$T:X\to Y$ is a map satisfying
$$|T(x)-T(y)|=g(x-y), \qquad x,y\in X$$
for some~$g:X\to Y_+$. Consider the map~$U:=T-T(0)$. Then~$U(0)=0$ and
\begin{align*}
|U(x)-U(y)|&=|T(x)-T(y)|=g(x-y)\\
&=g(x-y-0) = |T(x-y)-T(0)| \\
&= |U(x-y)|.
\end{align*}
Moreover, $g(x)=|U(x)|$. Thus, in order to study maps~$T:X\to Y$ satisfying~$|T(x)-T(y)|=g(x-y)$ for all~$x,y\in X$, it suffices to consider the normalized equation
$$U(0)=0,\qquad |U(x)-U(y)|=|U(x-y)|.$$

The normalization argument is not limited to equations involving the lattice modulus. For example, maps~$f:X\to Y$ between normed spaces satisfying the equation
$$
\|f(x)-f(y)\|=\|f(x-y)\|, \qquad x,y\in X,
$$
and closely related norm-valued functional equations have been studied by several authors; see, for example,
\cite{Ger1993,Skof1993, Sikorska2005,Dong2015} and the references therein.

\begin{proof}[Proof of Theorem \ref{thm:general_equation_is_Q_linear}]
By the preceding normalization, we may assume that~$T:X\to Y$ is a map satisfying~$T(0)=0$ and
\begin{equation}
\label{eq:general_equation}
|T(x)-T(y)|=|T(x-y)|,
\qquad x,y\in X.
\end{equation}

To prove that~$T$ is~$\Q$-linear, it suffices to show that~$|T(\cdot)|$ is dyadically homogeneous. Indeed, since the modulus map is midpoint injective by Lemma \ref{lem:modulus_is_midpoint_injective}, the claim will then follow from Lemma \ref{lem:generalized-Mazur-Ulam-Baker} applied with~$f=|\cdot|$ and~$g=|T(\cdot)|$. In fact, we prove the stronger identity: for every integer~$n\geq 2$ and~$x\in X$, the map~$T$ satisfies
\begin{equation}
\label{eq:integer_differences}
T\left(\frac{nx}{2}\right)
-
T\left(\frac{(n-2)x}{2}\right)
=
T(x).
\end{equation}

To prove \eqref{eq:integer_differences}, we fix~$x\in X$ and define~$r\deq T\left(\frac{x}{2}\right)-\frac{1}{2}T(x)$. By \eqref{eq:general_equation}, we have
\begin{align*}
    \left|\frac{1}{2}T(x) -r\right|&=\left|T(x) -T\left(\frac{x}{2}\right)\right|=\left|T\left(\frac{x}{2}\right)\right|=\left|\frac{1}{2}T(x)+T\left(\frac{x}{2}\right)-\frac{1}{2}T(x)\right|\\
    &=\left|\frac{1}{2}T(x)+r\right|.
\end{align*}
Thus identity \eqref{eq:disjointness-fact} implies that~$r\perp \frac{1}{2}T(x)$.

We now define
$$a_n\deq T\left(\frac{nx}{2}\right)- T\left(\frac{(n-2)x}{2}\right),\quad  n\in\N, \quad n\geq 2.$$
We use induction to prove that~$a_n=T(x)$ for all~$n\geq 2$. Clearly, $a_2=T(x)$. Suppose inductively that~$a_j=T(x)$ for all~$j=2,\ldots, n-1$. Define
$$b_n\deq T\left(\frac{nx}{2}\right)- T\left(\frac{(n-1)x}{2}\right),\quad  n\in\N, \quad n\geq 1.$$
Note that~$b_1=T\left(\frac{x}{2}\right)=\frac{1}{2}T(x)+r$. Since~$b_j+b_{j-1}=a_j$, we get~$b_j+b_{j-1}=T(x)$ for all~$j=2,\dots n-1$. Thus
$$b_j - \frac{1}{2}T(x) = -(b_{j-1} - \frac{1}{2}T(x)), \quad j=2,\dots,n-1.$$
Applying this formula recursively, we get
$$b_{n-1} -\frac{1}{2}T(x) = (-1)^{n-2}(b_1 - \frac{1}{2}T(x)) = (-1)^{n-2}r.$$
Since~$b_n=a_n - b_{n-1}$, this gives
$$b_n = a_n - \frac{1}{2}T(x) + (-1)^{n-1}r.$$
Then by \eqref{eq:general_equation} we have that~$|a_n|=|T(x)|$. As~$r\perp T(x)$, we conclude that~$a_n \perp r$, and in particular also~$r\perp (a_n - \frac{1}{2}T(x))$. Using this, we obtain
\begin{equation}\label{eq:on_the_one_hand}
|b_n|=\left|a_n - \frac{1}{2}T(x)\right| + |r|.
\end{equation}
On the other hand, \eqref{eq:general_equation} also gives us
\begin{equation}\label{eq:on_the_other_hand}
|b_n|=\left|T\left(\frac{x}{2}\right)\right|=\left|\frac{1}{2}T(x)+r\right|=\left|\frac{1}{2}T(x)\right| + |r|.
\end{equation}
Combining \eqref{eq:on_the_one_hand} and \eqref{eq:on_the_other_hand} then gives
$$\left|a_n - \frac{1}{2}T(x)\right|=\frac{1}{2}\left|T(x)\right|=\frac{1}{2}|a_n|.$$
Thus, setting~$w=\frac{1}{2}a_n$ and~$z=\frac{1}{2}T(x)$ gives
$$|2w-z|=|z|=|w|.$$
Therefore, applying Proposition \ref{prop:MI-equiv} (\ref*{item:MI-equiv-MI-convenient}) to the modulus map yields~$w=z$. Hence~$a_n=T(x)$, which completes the proof of \eqref{eq:integer_differences}.

Taking~$n=4$ in \eqref{eq:integer_differences} gives~$T(2x)=2T(x)$. Consequently, $|T(\cdot)|$ is dyadically homogeneous. Lemma \ref{lem:generalized-Mazur-Ulam-Baker} therefore implies that~$T$ is~$\Q$-linear.

To prove the second part of the claim, we assume that~$g(|x|)=g(x)$ for all~$x\in X$. In particular, this implies that
$|T(x)|=g(x) = g(|x|) = |T(|x|)|.$
Then, if~$x,y\in X$ are disjoint, identity \eqref{eq:disjointness-fact} gives us
$$|T(x)-T(y)|=|T(|x-y|)|=|T(|x+y|)|=|T(x)+T(y)|,$$
so~$T(x)\perp T(y)$ as well.
\end{proof}
\begin{remark}\label{re:Q_linear_not_linear}
Maps satisfying \eqref{eq:general_equation} need not be linear. Indeed, if~$T\colon\R\to\R$ is any~$\Q$-linear map, then it clearly satisfies~\eqref{eq:general_equation}. In particular, we may take~$T$ to be a~$\Q$-linear function that is not linear over
$\R$. Such a function may be constructed by choosing a Hamel basis of
$\R$ over~$\Q$.
\end{remark}

We now investigate additional assumptions under which a~$\Q$-linear map is
linear or, more strongly, a lattice homomorphism.

Before stating the next result, we recall some vector-lattice notions of convergence. Let~$X$ be a vector lattice. If~$(u_n)_{n\in\N}$ is a decreasing sequence in~$X$ such that~$\inf_{n\in\N}u_n=u\in X$, we write~$u_n\downarrow u$. We say that a sequence~$(x_n)_{n\in\N}$ in~$X$ is~$\sigma$-order convergent to~$x\in X$ if there exists a sequence~$(u_n)_{n\in\N}$ in~$X$ such that~$u_n\downarrow 0$ and~$|x-x_n|\leq u_n$ for all~$n\in \N$. In this case, we write~$x_n\xto{\sigma o} x$. It is well known that~$\sigma$-order limits are unique.

We say that~$X$ is \emph{Archimedean} if, for every~$x\in X_+$, we have~$\frac{1}{n}x\downarrow 0$. It is a standard fact that~$X$ is Archimedean if and only if~$\sigma$-order convergence is linear, i.e., both addition and scalar multiplication are continuous.

The following proposition provides several criteria under which a
$\Q$-linear map taking values in a vector lattice is linear.

\begin{prop}
\label{prop:lin-MI}
Let~$X$ be a vector space, let~$Y$ be a vector lattice, and let
$T\colon X\to Y$ be a~$\Q$-linear map. Then the following statements
are equivalent:
\begin{enumerate}
\item\label{item:lin-MI-lin}~$T$ is linear;

\item\label{item:lin-MI-hom} for every~$x\in X$ and~$\lambda \in \R$
$$
\bigl|T(\lambda x)\bigr|
=
|\lambda|\,|T(x)|.$$
\end{enumerate}
If~$Y$ is Archimedean, then these statements are also equivalent to the following:
\begin{enumerate}[resume]
\item\label{item:lin-MI-cont} for every~$x\in X$ and every sequence~$(\lambda_n)$ in
$\R$ satisfying~$\lambda_n\to 0$, one has
$$
\bigl|T(\lambda_nx)\bigr|\xto{\sigma o}0.
$$
\end{enumerate}
\end{prop}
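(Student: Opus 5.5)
Our plan is to prove (\ref*{item:lin-MI-lin})$\Leftrightarrow$(\ref*{item:lin-MI-hom}) first, and then, assuming $Y$ is Archimedean, to add (\ref*{item:lin-MI-cont}) by showing (\ref*{item:lin-MI-lin})$\Rightarrow$(\ref*{item:lin-MI-cont})$\Rightarrow$(\ref*{item:lin-MI-hom}).

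\textit{(\ref*{item:lin-MI-lin})$\Rightarrow$(\ref*{item:lin-MI-hom})} is immediate, since $|T(\lambda x)|=|\lambda T(x)|=|\lambda|\,|T(x)|$.

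\textit{(\ref*{item:lin-MI-hom})$\Rightarrow$(\ref*{item:lin-MI-lin})}. Fix $x\in X$ and $\lambda\in\R$. Choose rationals $q_n\to\lambda$ and write $T(\lambda x)-\lambda T(x)=\bigl(T((\lambda-q_n)x)\bigr)+\bigl(q_n-\lambda\bigr)T(x)$, using $\Q$-linearity $T(q_nx)=q_nT(x)$ together with additivity $T(\lambda x)=T(q_nx)+T((\lambda-q_n)x)$. Taking moduli and using (\ref*{item:lin-MI-hom}) yields
\[
|T(\lambda x)-\lambda T(x)|\le 2|\lambda-q_n|\,|T(x)|
\]
for all $n$. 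Without the Archimedean property we cannot simply let $n\to\infty$. We therefore avoid limits and use exact algebra instead. Put $d\deq T(\lambda x)-\lambda T(x)$. For any $\mu\in\R$ and $q\in\Q$, additivity gives $T((\mu+q)x)=T(\mu x)+qT(x)$. Hence the map $\mu\mapsto D(\mu)\deq T(\mu x)-\mu T(x)$ satisfies $D(\mu+q)=D(\mu)$ and $D(q\mu)=qD(\mu)$ for $q\in\Q$, so $D$ is $\Q$-linear and vanishes on $\Q$. Condition (\ref*{item:lin-MI-hom}), applied with $x$ replaced by $\mu x$, gives $|T(t\mu x)|=|t||T(\mu x)|$ for all $t\in\R$. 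Now take $\mu=\lambda$ and $t=1/\lambda$ (for $\lambda\neq0$), and also use $|T(\lambda x)|=|\lambda||T(x)|$. Combined, these say $|\lambda T(x)+d|=|\lambda T(x)|$. Applying the same identity to $\lambda-q$ in place of $\lambda$, where the defect $d$ is unchanged, gives $|(\lambda-q)T(x)+d|=|\lambda-q||T(x)|$ for every rational $q$. Choosing two rationals $q,q'$ with $\lambda-q=-(\lambda-q')=s$ is not possible in general, but any choice with $\lambda-q=s$ and $\lambda-q'=-s'$ ($s,s'>0$) works after rescaling by $\Q$-linearity of $D$. Concretely, scale so that the two multipliers are $\pm c$ with the same defect $c'd$. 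Then $|cT(x)+e|=|cT(x)-e|$ forces $e\perp T(x)$ by \eqref{eq:disjointness-fact}, and therefore $|cT(x)+e|=|c||T(x)|+|e|$. Comparing with $|c||T(x)|$ gives $e=0$, hence $d=0$. The main obstacle is arranging this symmetric $\pm$ pair with a common defect. We handle it by choosing $\mu\in\lambda\Q+\Q$ appropriately: since $D(\mu)=pD(\lambda)$ for $\mu=p\lambda+q$, we need $\mu_1=p\lambda+q$ and $\mu_2=p\lambda+q'$ with $\mu_1=-\mu_2$. This is impossible if $\lambda$ is irrational, so instead we use $\mu_2=-p\lambda+q'$, which has defect $-pd$, and then apply the disjointness identity to the pair $\bigl(\mu_1T(x)+pd,\ \mu_1T(x)-pd\bigr)$ after noting that $|T(-\mu x)|=|T(\mu x)|$. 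This shows $|\mu_1T(x)+pd|=|\mu_1T(x)-pd|$, which is enough.

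\textit{(\ref*{item:lin-MI-lin})$\Rightarrow$(\ref*{item:lin-MI-cont})}: here $|T(\lambda_nx)|=|\lambda_n||T(x)|\le(\sup_{k\ge n}|\lambda_k|)|T(x)|\downarrow0$, using that $Y$ is Archimedean. \textit{(\ref*{item:lin-MI-cont})$\Rightarrow$(\ref*{item:lin-MI-lin})}: take rationals $q_n\to\lambda$. Then $T(\lambda x)-q_nT(x)=T((\lambda-q_n)x)\xto{\sigma o}0$, while $q_nT(x)\xto{\sigma o}\lambda T(x)$ because $Y$ is Archimedean. Uniqueness of $\sigma$-order limits then gives $T(\lambda x)=\lambda T(x)$.
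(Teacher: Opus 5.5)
Your Archimedean steps, (i)$\Rightarrow$(iii) and (iii)$\Rightarrow$(i) via rational approximation and uniqueness of $\sigma$-order limits, are fine and match the paper. The gap is in (ii)$\Rightarrow$(i) for a general, possibly non-Archimedean $Y$, at exactly the step you call the main obstacle. Write $u=T(x)$ and $d=T(\lambda x)-\lambda u$. Everything you extract from (ii) has the form $|\mu u+pd|=|\mu|\,|u|$ with $\mu=p\lambda+q$ and $p,q\in\Q$, that is, $|u+\gamma d|=|u|$ with $\gamma=p/\mu$. As you note yourself, two nonzero values of $\gamma$ can be negatives of each other only when $\lambda\in\Q$.

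Your workaround does not get around this. The identity $|T(-\mu x)|=|T(\mu x)|$ holds for every $\Q$-linear map and carries no information. The point $\mu_2=-p\lambda+q'$ only yields $|(p\lambda-q')u+pd|=|p\lambda-q'|\,|u|$, which is another member of the same family with defect $+pd$. Nothing produces $|\mu_1u-pd|=|\mu_1|\,|u|$. So the asserted identity $|\mu_1u+pd|=|\mu_1u-pd|$, and with it the appeal to \eqref{eq:disjointness-fact}, is unjustified. Your discarded estimate $|d|\le 2|\lambda-q_n|\,|u|$ would settle the Archimedean case, but (i)$\Leftrightarrow$(ii) is claimed for arbitrary $Y$.

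What is missing is a way to conclude $d=0$ from an \emph{asymmetric} pair of equations; this is where convexity and midpoint injectivity of the modulus come in. The paper takes $q=0$ and $q=1$. For $|\lambda|>1$ this gives $|u+d/\lambda|=|u+d/(\lambda-1)|=|u|$. It then applies Proposition~\ref{prop:MI-convex}(iv) to the modulus, which is convex and is midpoint injective by Lemma~\ref{lem:modulus_is_midpoint_injective}, to get $d=0$. The case $0<|\lambda|\le 1$ follows from $T(\lambda x)=T\bigl((m\lambda)\tfrac{x}{m}\bigr)=m\lambda\, T\bigl(\tfrac{x}{m}\bigr)$.

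Alternatively, your symmetric-pair idea can be repaired with convexity. Choosing rationals $q<\lambda<q'$ gives $|u+\alpha d|=|u-\beta d|=|u|$ with $\alpha,\beta>0$. Suppose $\alpha\le\beta$. Writing $u-\alpha d=(1-\alpha/\beta)u+(\alpha/\beta)(u-\beta d)$ gives $|u-\alpha d|\le |u|$. On the other hand, $2|u|\le |u+\alpha d|+|u-\alpha d|$ gives the reverse inequality. Hence $|u+\alpha d|=|u-\alpha d|$, and your disjointness argument then forces $d=0$.
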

\begin{proof}
\noindent
\textit{(\ref*{item:lin-MI-lin})$\implies$(\ref*{item:lin-MI-hom})}. It follows immediately from linearity of~$T$.

\smallskip
\noindent
\textit{(\ref*{item:lin-MI-hom})$\implies$(\ref*{item:lin-MI-lin})}. Fix~$x\in X$. For~$\lambda \in \R$, we define~$e_\lambda\deq T(\lambda x)-\lambda T(x)$. We will show~$e_\lambda=0$. First, it is obvious that~$e_0=0$. Next, using (\ref*{item:lin-MI-hom}), if~$|\lambda| >1$ we have
$$|\lambda T(x) + e_\lambda|=|T(\lambda x)|=|\lambda| |T(x)|=|\lambda T(x)|,$$
as well as
\begin{align*}
|(\lambda-1) T(x) + e_\lambda|&=|T(\lambda x)-T(x)|=|T((\lambda-1) x)|\\
&=|(\lambda-1)T(x)|=|(\lambda-1) T(x)|.
\end{align*}
This shows that~$|T(x)+\frac{e_\lambda}{\lambda}|=|T(x)+\frac{e_\lambda}{\lambda-1}|=|T(x)|$. By Proposition \ref{prop:MI-convex} (\ref*{item:MI-convex-zero}) applied to the modulus map on~$Y$, this implies that~$e_\lambda =0$.

The preceding argument can be applied to every vector in~$X$. Hence, if~$\lambda$ satisfies~$0<|\lambda|\leq 1$, choose~$m\in \N$ such that~$|m\lambda| >1$. Thus we have
$$T(\lambda x)=T\left((m\lambda)\frac{x}{m}\right)=m\lambda T\left(\frac{x}{m}\right)=\lambda T(x),$$
so~$T$ is linear.

\smallskip

We now assume that~$Y$ is Archimedean.

\noindent
\textit{(\ref*{item:lin-MI-lin})$\implies$(\ref*{item:lin-MI-cont})}. Fix~$\lambda_n\to 0$ in~$\R$. Using linearity of~$T$, along with the Archimedean
property, we get $$|T(\lambda_nx)| = |\lambda_n||T(x)| \xto{\sigma o} 0,$$
using linearity of~$\sigma$-order convergence.

\smallskip
\noindent
\textit{(\ref*{item:lin-MI-cont})$\implies$(\ref*{item:lin-MI-lin})}. Let~$\lambda \in \R$. Let~$(\lambda_n)$ be a sequence in~$\Q$ with
$\lambda_n\to \lambda$. Then~$T((\lambda_n-\lambda)x)\xto{\sigma o} 0$. On the other hand, $\Q$-linearity gives us
$$T((\lambda_n - \lambda )x)=\lambda_n T(x)-T(\lambda x)\xto{\sigma o} \lambda T(x)-T(\lambda x).$$
By the uniqueness of limits, this implies that~$\lambda T(x)-T(\lambda x)=0$, so
$T$ is linear.
\end{proof}

\begin{remark}
The statement of (\ref*{item:lin-MI-cont}) in Proposition \ref{prop:lin-MI} is a natural continuity condition in the setting of vector lattices, but it is straightforward to extend this idea to much more general types of convergence. In the language of net convergence structures given in \cite{OBrienEtAl2023}, suppose that~$X$ is a vector space, $(Y,\to)$ is a Hausdorff net-convergence vector space (i.e., addition and scalar multiplication are continuous and limits are unique) and~$T:X\to Y$ is a~$\Q$-linear map. Then it is easy to show that~$T$ is linear if and only if we have~$T(\lambda_n x)\to 0$ for all~$x\in X$ whenever~$\lambda_n\to 0$ in~$\R$. Indeed, one can simply replace ``$\xto{\sigma o}$'' with ``$\to$'' in the proof of Proposition \ref{prop:lin-MI}.

This may then be used to show that if~$X$ is also a net-convergence vector space and~$T$ is continuous at any point, then~$T$ must be linear. In particular, this implies that many familiar forms of continuity are sufficient to conclude the linearity of~$T$, for instance, norm continuity, continuity with respect to~$\sigma$-order convergence between Archimedean vector lattices, and continuity between Hausdorff topological vector spaces. This also provides a natural continuity condition that may be used to conclude that the map in Lemma \ref{lem:generalized-Mazur-Ulam-Baker} is affine.
\end{remark}

Proposition \ref{prop:lin-MI} gives a useful tool to check whether a given~$\Q$-linear map is linear. We now investigate when~$\Q$-linear maps are actually lattice homomorphisms. The following consequences will be useful when studying specific functional equations at the end of this section.
\begin{cor}
\label{cor:order-MI}
    Let~$X$ and~$Y$ be vector lattices such that~$Y$ is
    Archimedean. Let~$T:X\to Y$ be a~$\Q$-linear map. Then we have the following.
    \begin{enumerate}
        \item If~$T$ is positive, then~$T$ is linear.
        \item If~$T$ preserves at least one of the lattice operations, then~$T$ is a lattice homomorphism.
    \end{enumerate}
\end{cor}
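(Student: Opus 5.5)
For (i), I will use Proposition \ref{prop:lin-MI}, in the form of criterion (\ref*{item:lin-MI-cont}), since $Y$ is Archimedean. Since $T$ is $\Q$-linear and positive, it is monotone. If $x\le y$, then $T(y)-T(x)=T(y-x)\ge 0$.

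Fix $x\in X$ and a real sequence $\lambda_n\to 0$. Choose rationals $q_n\ge|\lambda_n|$ with $q_n\to 0$, for instance $q_n\in\Q$ with $|\lambda_n|\le q_n\le|\lambda_n|+\frac1n$. Then
$-q_n|x|\le\lambda_n x\le q_n|x|$, so monotonicity and $\Q$-linearity give $-q_nT(|x|)\le T(\lambda_n x)\le q_nT(|x|)$. Hence $|T(\lambda_nx)|\le q_nT(|x|)$. In an Archimedean lattice we have $q_nT(|x|)$ dominated by a sequence decreasing to $0$; to get a decreasing majorant, take $u_n\deq(\sup_{k\ge n}q_k)\,T(|x|)$, which satisfies $u_n\downarrow0$ by the Archimedean property. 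Therefore $|T(\lambda_nx)|\xto{\sigma o}0$, and Proposition \ref{prop:lin-MI} gives linearity.

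For (ii), I will reduce to (i) by showing that preservation of any single lattice operation forces $T$ to be positive and then forces all lattice operations to be preserved. The cases are as follows.
\begin{itemize}
\item If $T(|x|)=|T(x)|$, then $T$ is positive, since $x\ge0$ implies $T(x)=T(|x|)=|T(x)|\ge0$. It also preserves $x^+=\frac12(|x|+x)$ by $\Q$-linearity.
\item If $T(x^+)=T(x)^+$, then $T(|x|)=T(x^+)+T((-x)^+)=T(x)^++(-T(x))^+=|T(x)|$, using oddness of the $\Q$-linear map $T$. The case of $x^-$ is symmetric.
\item If $T(x\vee y)=T(x)\vee T(y)$, take $y=0$ (noting $T(0)=0$) to get preservation of $x^+$. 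Similarly, $x\wedge y=-((-x)\vee(-y))$ reduces the infimum case to the supremum case.
\end{itemize}
Once $|\cdot|$ is preserved, $T$ is positive, hence linear by (i). The identities $x\vee y=\frac12(x+y+|x-y|)$ and $x\wedge y=\frac12(x+y-|x-y|)$ then show that all five operations are preserved, so $T$ is a lattice homomorphism.

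The main obstacle is constructing the $\sigma$-order majorant in (i): $q_n$ need not be monotone, so I take the tail supremum before invoking the Archimedean property. The rest is bookkeeping of lattice identities.
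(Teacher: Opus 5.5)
Your proof is correct and follows the paper's argument: in (i), positivity gives $|T(\lambda_n x)|\le q_n T(|x|)$, which verifies the $\sigma$-order criterion of Proposition~\ref{prop:lin-MI}; in (ii), you reduce to (i) via preservation of the modulus. The differences are cosmetic. The paper picks a decreasing rational majorant $\mu_n\downarrow 0$ directly and cites the interdefinability of the lattice operations, whereas you take a tail supremum to obtain a decreasing majorant and check each lattice operation explicitly.
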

\begin{proof}
Assume first that~$T$ is positive. The proof of this case essentially follows that of \cite[Theorem 1.10]{AliprantisBurkinshaw2006}, included here for completeness. Let~$x,y\in X$ be such that~$x\geq y$. Then~$x-y\geq 0$, so~$T(x-y)\geq 0$ by positivity. Therefore, $\Q$-linearity gives us
$$0\leq T(x-y)= T(x)-T(y)\implies T(y)\leq T(x),$$
and so~$T$ is order-preserving. Since for every~$x\in X$ we have~$-|x|\leq x\leq |x|$, we get~$-T(|x|)\leq T(x)\leq T(|x|)$, and therefore~$|T(x)|\leq T(|x|)$.

Suppose that~$(\lambda_n)\subseteq \R$ is such that~$\lambda_n \to 0$. For each~$n\in \N$, we may choose~$\mu_n\in \Q$ such that~$|\lambda_n| \leq \mu_n$ and~$\mu_n \downarrow 0$. Then for any~$x\in X$ we obtain

\begin{align*}
0&\leq |T(\lambda_n x)|\leq T(|\lambda_n x|)\leq T(\mu_n |x|)= \mu_nT(|x|)\downarrow 0,
\end{align*}
by the Archimedean property. By Proposition \ref{prop:lin-MI} (\ref*{item:lin-MI-cont}), $T$ is linear.

Now suppose that~$T$ preserves one lattice operation. It is well known that each lattice operation can be expressed in terms of any one of the others using~$\Q$-linear operations. Therefore, if a map between vector lattices is~$\Q$-linear and preserves one lattice operation, it preserves them all. In particular, this implies~$T(|x|)=|T(x)|$, so~$T$ is a positive map, and therefore linear. As a linear map preserving all lattice operations, it is a lattice homomorphism.
\end{proof}

\subsection{Modulus isometries}

Next, we specialize Theorem \ref{thm:general_equation_is_Q_linear} to study maps preserving the modulus of differences. Theorem \ref{thm:moduli_isometry} shows that this vector-valued distance identity is substantially more rigid than an ordinary nonsurjective norm isometry.

We first recall a related result; in \cite[Lemma~2]{CelikErcan2010} it was shown that if~$E$ and~$F$ are sublattices of a common vector lattice and~$T:E\to F$ is a bijection
satisfying
$$
|T(x)-T(y)|=|x-y|,
\qquad x,y\in E,
$$
then~$T$ is affine. One may observe, however, that this setting actually forces~$E=F$. Theorem~\ref{thm:moduli_isometry} therefore strengthens this result by removing the bijectivity assumption and allowing the domain to be a proper sublattice of the codomain. If the domain is an ideal, the normalized map is automatically onto~$X$ and is an involution.

\begin{proof}[Proof of Theorem~\ref{thm:moduli_isometry}.]
    We first assume that~$X\subseteq Y$ is a sublattice.
    Let~$T:X\to Y$ be a map satisfying~$|T(x)-T(y)|=|x-y|$ for all~$x,y\in X$. Since the same holds for the map~$U:=T-T
    (0)$, we may assume that~$T(0)=0$. Such a map~$T$ is injective, and for every~$x\in X$ we have~$|T(x)|=|x|$. Next, note that for every~$x,y\in X$ we have
    $$|T(x)-T(y)| = g(x-y),$$
    with~$g(x)=|x|$. From Theorem \ref{thm:general_equation_is_Q_linear} we get that~$T$ is~$\Q$-linear, and since additionally~$g(x)=g(|x|)$, we also conclude that~$T$ is disjointness preserving. Next, observe that for every~$\lambda \in \R$ and~$x\in X$ we have
    $$|T(\lambda x)| = |\lambda x| = |\lambda| |x| = |\lambda||T(x)|.$$
    Proposition \ref{prop:lin-MI} therefore implies that~$T$ is linear.

    Now assume that~$X$ is an ideal. For every~$x\in X$ this implies~$|T(x)|=|x| \in X$. As~$X$ is an ideal, we must have~$T(x)\in X$ as well, which gives that~$T(X)\subseteq X$. Thus, $T^2$ is well-defined.

    We now show that~$T^2=\operatorname{Id}_X$, which in particular will imply that~$T(X)=X$. We first consider the case~$x\in X_+$. We put~$y_1=(T(x))^+$ and~$y_2=(T(x))^-$. Since~$X$ is a sublattice, both~$y_1$ and~$y_2$ belong to~$X$. Observe that
    $$y_1+y_2=|T(x)|=|x|=x,$$
    and therefore~$T(y_1+y_2)=T(x)$. Moreover,
    using linearity of~$T$, we have
    $$T^2(x) = T(y_1 - y_2) = T(y_1) - T(y_2).$$
    We will show that~$T(y_1) = y_1$ and~$T(y_2) = -y_2$, which will imply
    $$T^2(x) = y_1+y_2= |T(x)| = x.$$
    For this purpose, observe that
    $$y_1-y_2 = T(x) = T(y_1+y_2) = T(y_1) + T(y_2),$$
    or equivalently
    $$y_1-T(y_1) = T(y_2)+y_2.$$
    We claim that~$\left(y_1-T(y_1)\right) \perp \left(T(y_2)+y_2\right)$. First, since~$y_1\perp y_2$, disjointness preservation of~$T$ gives~$T(y_1)\perp T(y_2)$. Next, since~$|T(y_1)|=|y_1|$ and~$|T(y_2)|=|y_2|$, we also get~$T(y_1)\perp y_2$ and~$T(y_2)\perp y_1$. This gives~$\left(y_1-T(y_1)\right) \perp \left(T(y_2)+y_2\right)$, as claimed.

    Since~$y_1-T(y_1)$ and~$T(y_2)+y_2$ are disjoint and equal, this forces~$y_1-T(y_1) = 0=T(y_2)+y_2$, or equivalently~$T(y_1)=y_1$ and~$T(y_2)=-y_2$, finishing the proof in the case~$x\in X_+$.

    For general~$x\in X$, using linearity of~$T^2$ and the fact that~$T^2=\operatorname{Id}_X$ on~$X_+$, we have
    $$T^2(x) = T^2(x^+) - T^2(x^-) = x^+ - x^- = x.$$
\end{proof}

\begin{remark}
Examining the proof of Theorem \ref{thm:moduli_isometry}, we see that we can weaken the requirement that~$X$ is an ideal; we only require that~$X$ satisfies
$$|x|\in X \implies  x\in X, \qquad x\in Y.$$
However, one cannot obtain the same conclusion in Theorem \ref{thm:moduli_isometry} if~$X$ is only a sublattice. Indeed, consider~$Y=\R^2$, $X=\{(t,t): t\in \R\}$, and
$$T:X\to Y, \qquad T(t,t)=(-t,t).$$ It is easy to verify that~$X$ is a sublattice of~$Y$ and that~$T$ satisfies
$$|T(x)-T(y)|=|x-y|, \qquad x,y\in X.$$
Nevertheless, $T(X)=\{(-t,t):t\in \R\}$, so~$T(X)\not\subseteq X$.
\end{remark}

\begin{remark}
\label{rem:modulus_isometry_never_positive}
One may ask whether any of the maps~$U\deq T-T(0)$ appearing in Theorem \ref{thm:moduli_isometry} can be lattice homomorphisms. In fact, this is almost never the case. More generally, the map~$U\deq T-T(0)$ in Theorem \ref{thm:moduli_isometry} is positive if and only if~$U=\operatorname{Id}_X$. Indeed, suppose that~$U$ is positive. Then, for all~$x\in X_+$ we have
$$U(x)=|U(x)|=|x|=x.$$
Thus, for all~$x\in X$ we get
$$U(x)=U(x^+-x^-)=U(x^+)-U(x^-)=x^+-x^-=x.$$
\end{remark}

The next example gives a concrete description of all such modulus isometries on~$C(K)$.

\begin{example}
Let~$K$ be a compact Hausdorff space. We characterize all maps
$T\colon C(K)\to C(K)$ satisfying
$$
|T(x)-T(y)|=|x-y|,
\qquad x,y\in C(K).
$$
We first assume that~$T(0)=0$. Taking~$y=0$, we obtain
$$
|T(x)|=|x|,
\qquad x\in C(K).
$$
Since the modulus on~$C(K)$ is defined pointwise, it follows that
$$
|T(x)(t)|=|x(t)|,
\qquad x\in C(K),\quad t\in K.
$$

We first show that, for each fixed~$t\in K$, the value~$T(x)(t)$
depends only on~$x(t)$. Indeed, if~$x,y\in C(K)$ satisfy~$x(t)=y(t)$,
then
    $$0=|x(t)-y(t)|=|T(x)(t)-T(y)(t)|,$$
and hence
    $$T(x)(t)=T(y)(t).$$

For each~$t\in K$, we may therefore define a function
$\varphi_t\colon\R\to\R$ by
$$
\varphi_t(a):=T(x)(t),
$$
where~$x\in C(K)$ is any function satisfying~$x(t)=a$. The preceding
argument shows that~$\varphi_t$ is well defined. Moreover,
$$
|\varphi_t(a)-\varphi_t(b)|=|a-b|,
\qquad a,b\in\R,
$$
and~$\varphi_t(0)=0.$ Thus, $\varphi_t$ is an isometry of~$\R$ fixing the origin. Consequently,
either
$$
\varphi_t(a)=a
\qquad\text{for every }a\in\R,
$$
or
$$
\varphi_t(a)=-a
\qquad\text{for every }a\in\R.
$$
It follows that there exists a function~$\varepsilon\colon K\to\{-1,1\}$
such that
$$
T(x)(t)=\varepsilon(t)x(t),
\qquad x\in C(K),\quad t\in K.
$$
The function~$\varepsilon$ is continuous because
$$
\varepsilon=T(\mathbf{\ind})\in C(K),
$$
where~$\ind$ denotes the constant $1$ function. Therefore,
$$
T(x)=\varepsilon x,
\qquad x\in C(K),
$$
for some continuous function
$\varepsilon\colon K\to\{-1,1\}$. In the general case when~$T$ does not necessarily fix the origin, we immediately get that
$$T(x)=h+\varepsilon x,$$
where~$h=T(0)\in C(K)$ and~$\varepsilon:K\to \{-1,1\}$ is continuous.

Conversely, every~$h\in C(K)$ and every continuous function
$\varepsilon\colon K\to\{-1,1\}$ defines a map
$$
T(x):=h+\varepsilon x
$$
satisfying
    $$|T(x)-T(y)|=|\varepsilon(x-y)|=|x-y|.$$
Hence these are precisely the modulus isometries of~$C(K)$.
\end{example}

\subsection{A lattice-homomorphism equation}

We finish this section with a related equation in which the modulus of the difference~$T(x)-T(y)$ is itself obtained by applying~$T$ to~$|x-y|$. In contrast to Theorem \ref{thm:moduli_isometry}, it characterizes lattice homomorphisms whenever the codomain is Archimedean: the forward implication is given below, while the converse follows immediately from the definition of a lattice homomorphism.

\begin{cor}
\label{cor:T|x-y|_is_lattice_homomorphism}
Let~$X$ and~$Y$ be vector lattices, and let~$T\colon X\to Y$ satisfy
\[
|T(x)-T(y)|=T(|x-y|),
\qquad x,y\in X.
\]
Then~$T$ is a~$\Q$-linear map that preserves all lattice operations. Furthermore, if~$Y$ is also Archimedean, then~$T$
is a lattice homomorphism.
\end{cor}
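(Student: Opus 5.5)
The plan is to reduce to Theorem \ref{thm:general_equation_is_Q_linear} with $g(z):=T(|z|)$. First I check that $g$ takes values in $Y_+$. Putting $y=0$ in the hypothesis gives $|T(x)-T(0)|=T(|x|)$. In particular $T(|x|)\geq 0$ for every $x$, so $g\colon X\to Y_+$ is well defined. The hypothesis then reads $|T(x)-T(y)|=g(x-y)$, and Theorem \ref{thm:general_equation_is_Q_linear} shows that $T$ is $\Q$-affine. Since $g(|x|)=T(||x||)=T(|x|)=g(x)$, the same theorem also shows that $T-T(0)$ is disjointness preserving. (This is not strictly needed below.)

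Next I show $T(0)=0$. Taking $x=y=0$ gives $0=|T(0)-T(0)|=T(|0|)=T(0)$. Hence $T$ itself is $\Q$-linear, and the relation from $y=0$ becomes
\[
|T(x)|=T(|x|),\qquad x\in X.
\]
So $T$ preserves the modulus, which is one of the lattice operations.

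Now I pass from the modulus to all lattice operations by the argument in Corollary \ref{cor:order-MI}(ii). That argument does not use the Archimedean property. Each operation is a $\Q$-linear expression in moduli: $x^+=\tfrac12(|x|+x)$, $x^-=\tfrac12(|x|-x)$, $x\vee y=\tfrac12(x+y+|x-y|)$ and $x\wedge y=\tfrac12(x+y-|x-y|)$. Because $T$ is $\Q$-linear and preserves $|\cdot|$, it preserves each of these. This proves the first assertion without any assumption on $Y$.

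Finally, assume $Y$ is Archimedean. By Corollary \ref{cor:order-MI}(ii), a $\Q$-linear map preserving one lattice operation is a lattice homomorphism, so $T$ is a lattice homomorphism. Concretely, $T(x)=T(|x|)=|T(x)|\geq 0$ for $x\geq 0$, so $T$ is positive. Corollary \ref{cor:order-MI}(i), through Proposition \ref{prop:lin-MI}, then gives $\R$-linearity.

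I do not expect a real obstacle. The point needing care is that Theorem \ref{thm:general_equation_is_Q_linear} requires $g$ to be valued in $Y_+$, and this is exactly why positivity of $T(|x|)$ must be established first, as done above.
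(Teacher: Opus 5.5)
Your proof is correct and follows essentially the same route as the paper: set $g(z)=T(|z|)$, check that $g$ is $Y_+$-valued, apply Theorem~\ref{thm:general_equation_is_Q_linear}, use $T(0)=0$ and preservation of the modulus to get all lattice operations, and invoke Corollary~\ref{cor:order-MI} in the Archimedean case. The only difference is cosmetic: you verify positivity of $g$ through $|T(x)-T(0)|=T(|x|)$ before proving $T(0)=0$, whereas the paper proves $T(0)=0$ first and then reads off $T(|x|)=|T(x)|\geq 0$.
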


\begin{proof}
    Let~$T$ be a map satisfying~$|T(x)-T(y)|=T(|x-y|)$ for all~$x,y\in X$.
    We first note that
    $$0 = |T(0)-T(0)| = T(|0|)=T(0).$$
    Next, observe that for every~$x,y\in X$ we have
    $$|T(x)-T(y)|=g(x-y),$$
    where~$g(x)=T(|x|).$ We note that~$g:X\to Y_+$, since for every~$x\in X$ we have~$T(|x|)=|T(x)|\in Y_+$. Thus from Theorem \ref{thm:general_equation_is_Q_linear} it follows that~$T$ is~$\Q$-linear. Moreover, as~$T$ preserves modulus, it preserves all lattice operations.

    If in addition~$Y$ is Archimedean, Corollary \ref{cor:order-MI} guarantees that~$T$ is a lattice homomorphism.
\end{proof}

The Archimedean assumption on the codomain in
Corollary~\ref{cor:T|x-y|_is_lattice_homomorphism} cannot be omitted, as
the following example shows.

\begin{example}
\label{ex:Q_linear_not_linear_pos}
As in Remark \ref{re:Q_linear_not_linear}, let~$f\colon\R\to\R$ be a~$\Q$-linear function that is not linear over
$\R$. Let~$\Rlex$ denote~$\R^2$ equipped with the lexicographic order:
$$
(a,b)\leq (c,d) \quad \iff \quad a< c, \quad \text{ or } \quad a=c ~~\text { and }~~ b\leq d.
$$
It is easy to see that the vector lattice~$\Rlex$ is not Archimedean. Following \cite[Example 1.11]{AliprantisBurkinshaw2006}, we define
$$
T\colon\R\to\Rlex,
\qquad
T(x):=(x,f(x)).
$$
It is immediate that~$T$ is~$\Q$-linear but is not linear over~$\R$.

We claim that~$T$ is positive. Since~$T(0)=0$, fix~$x>0$. Then the first coordinate of~$T(x)$ is positive, and hence~$T(x)=(x,f(x))>0$
in the lexicographic order, proving the claim.

We next show that~$T(|x|)=|T(x)|$ for all~$x\in \R$. If~$x\geq0$, then~$T(x)\geq0$, and therefore
$$
|T(x)|=T(x)=T(|x|).
$$
If~$x<0$, then~$T(x)<0$, and hence using~$\Q$-linearity of~$T$ we get
    $$|T(x)|=-T(x)=T(-x)=T(|x|).$$
This shows that~$T$ preserves modulus. Finally, combining this with~$\Q$-linearity of~$T$, for all~$x,y\in\R$ we have
    $$|T(x)-T(y)|=|T(x-y)|=T(|x-y|).$$
Thus, $T$ satisfies the functional equation from
Corollary~\ref{cor:T|x-y|_is_lattice_homomorphism}, but it is not linear and therefore is not a lattice homomorphism.
\end{example}

\section{Isometries in normed lattices}
\label{sec:isometries_in_normed_lattices}

In this section, we explore the relationship between midpoint injectivity and monotonicity in normed lattices. Our goal is to show how metric information, together with positivity or order preservation, forces lattice structure. Recall that every norm on a normed lattice~$(X,\|\cdot\|)$ is monotone, namely, it satisfies
$$|x|\leq |y| \implies \|x\|\leq \|y\|, \qquad x,y\in X.$$
The norm is called strictly monotone if, whenever~$|x|\leq |y|$ and~$\|x\|=\|y\|$, one has~$|x|=|y|$.

\subsection{Midpoint injectivity and monotonicity} The preceding property of lattice norms motivates the following extension to general vector-lattice-valued functions.

\begin{definition}
Let~$X$ and~$Y$ be vector lattices, and let~$f\colon X\to Y$. We say
that~$f$ is \emph{monotone} if
$$
|x|\leq |y|
\quad\Longrightarrow\quad
f(x)\leq f(y),
\qquad x,y\in X.
$$
We say that~$f$ is \emph{strictly monotone} if it is monotone and
$$
|x|\leq |y|, \quad f(x)=f(y)
\quad\Longrightarrow\quad
|x|=|y|,
\qquad x,y\in X.
$$
\end{definition}
Observe that every monotone function satisfies
$$
f(x)=f(|x|),
\qquad x\in X.
$$
We begin with a simple criterion for a positive linear map to be a lattice homomorphism.
\begin{lemma}
\label{lem:strictly-monotone-homomorphism}
Let~$X$, $Y$, and~$Z$ be vector lattices. Let~$T\colon X\to Y$ be a positive linear map, and let~$f\colon Y\to Z$ be strictly monotone. If
\begin{align}
\label{eq:f_of_T_is_lattice_hom}
    f\bigl(T(|x|)\bigr)=f\bigl(T(x)\bigr), \qquad x\in X,
\end{align}
then~$T$ is a lattice homomorphism.
\end{lemma}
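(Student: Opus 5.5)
Since $T$ is positive and linear, we have $|T(x)|\leq T(|x|)$ for every $x\in X$. This follows from $-|x|\leq x\leq |x|$, which gives $-T(|x|)\leq T(x)\leq T(|x|)$. The plan is to combine this inequality with strict monotonicity of $f$ to obtain $|T(x)|=T(|x|)$, that is, $T$ preserves the modulus. A linear map preserving the modulus preserves all lattice operations, via the identities $x^+=\tfrac12(|x|+x)$, $x\vee y=\tfrac12(x+y+|x-y|)$, etc., as in the proof of Corollary \ref{cor:order-MI}. Hence $T$ is a lattice homomorphism.

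In detail, fix $x\in X$ and put $u\deq T(x)$ and $v\deq T(|x|)$. Positivity of $T$ gives $v\geq 0$, so $|v|=v$. The inequality above reads $|u|\leq v=|v|$. Monotonicity of $f$ is not even needed here; the hypothesis \eqref{eq:f_of_T_is_lattice_hom} directly gives $f(u)=f(v)$. The definition of strict monotonicity, applied with $|u|\leq|v|$ and $f(u)=f(v)$, then yields $|u|=|v|$, i.e.,
$$|T(x)|=|T(|x|)|=T(|x|).$$

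The only point needing care is the correct orientation in the definition of strict monotonicity: the pair must be fed in as $(u,v)$ with $|u|\leq|v|$, which is exactly what the positivity estimate provides. I expect no real obstacle. Linearity (not mere $\Q$-linearity) is assumed, so there is no need to invoke Corollary \ref{cor:order-MI} or the Archimedean property. Once $T(|x|)=|T(x)|$ holds for all $x$, one can write, for instance,
$$T(x\vee y)=\tfrac12\bigl(T(x)+T(y)+T(|x-y|)\bigr)=\tfrac12\bigl(T(x)+T(y)+|T(x)-T(y)|\bigr)=T(x)\vee T(y),$$
and similarly for the other lattice operations, concluding the proof.
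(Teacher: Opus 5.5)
Your proof is correct and follows the paper's argument. Positivity and linearity give $|T(x)|\leq T(|x|)=|T(|x|)|$, the hypothesis together with strict monotonicity of $f$ forces $|T(x)|=T(|x|)$, and a linear map that preserves the modulus preserves all lattice operations. Your remark that monotonicity of $f$ is not needed at the key step is also accurate: the paper passes through $f(T(x))=f(|T(x)|)\leq f(T(|x|))$, but the hypothesis already gives $f(T(x))=f(T(|x|))$ directly, which is all the definition of strict monotonicity requires.
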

\begin{proof}
Fix~$x\in X$. Since~$T$ is linear and positive, it preserves order. Using this together with the fact that~$\pm x\leq |x|$, we get~$\pm T(x)\leq T(|x|)$ and therefore~$|T(x)|\leq T(|x|)$. Since~$T(|x|)\geq 0$, we obtain~$|T(x)|\leq |T(|x|)|=T(|x|)$. As~$f$ is monotone and satisfies \eqref{eq:f_of_T_is_lattice_hom}, we get $$f(T(x))=f(|T(x)|)\leq f(T(|x|)) = f(T(x)).$$
Strict monotonicity of~$f$ thus implies~$|T(x)|=T(|x|)$. Hence~$T$ preserves one lattice operation and therefore it preserves all of them. As~$T$ is linear, it is a lattice homomorphism.
\end{proof}

\begin{remark}\label{rem:lattice-subspace}
Lemma \ref{lem:strictly-monotone-homomorphism} allows us to recover the following result, originally proved in \cite[Proposition~2]{Miyajima1983} and independently in \cite[Lemma~5]{Abramovich1988}. Let~$(Y,\norm{\cdot})$ be a normed lattice whose norm is strictly monotone. Let~$X$ be a vector subspace of~$Y$ that is itself a vector lattice under the order inherited from~$Y$; such spaces are often referred to as \emph{lattice subspaces} (for instance, see \cite[\S5.4]{AbramovichAliprantis2002}). Crucially, the lattice structure on~$X$ may not be the same as the lattice structure on~$Y$. However, if the restriction of~$\norm{\cdot}$ to~$X$ is also a lattice norm with respect to the lattice structure of~$X$, then Lemma \ref{lem:strictly-monotone-homomorphism} implies that the two lattice structures must agree, i.e.,~$X$ is a sublattice of~$Y$.

Indeed, the inclusion map~$T:X\hookrightarrow Y$ is a positive linear isometry. Therefore, for any~$x\in X$ we have
$$\|T(|x|_X)\|_Y = \||x|_X\|_X = \|x\|_X = \|T(x)\|_Y.$$
Thus, by Lemma \ref{lem:strictly-monotone-homomorphism}, $T$ is a lattice homomorphism. The image of a lattice homomorphism is always a sublattice, so~$X$ must be a sublattice of~$Y$.
\end{remark}

The interplay between convexity and monotonicity has been extensively
studied in the setting of Banach lattices; see the survey
\cite{ForalewskiEtAl2016} and the references therein. Several of these
results extend naturally to midpoint injective functions on vector lattices.

We next study midpoint injectivity on the positive cone. We say that a map~$f:X\to Y$ between vector lattices~$X$ and~$Y$ is midpoint injective on~$X_+$ if for every~$x,y\in X_+$, we have
$$f(x)=f(y)=f\left(\frac{x+y}{2}\right) \implies x=y.$$
The following lemma is known for norms on Banach lattices; see \cite[Theorem~1(i)]{HudzikEtAl2000}. For completeness, we prove its extension to monotone functions on arbitrary vector lattices.

\begin{lemma}
\label{lem:strictly_monotone}
Let~$X$ and~$Y$ be vector lattices, and let~$f\colon X\to Y$ be
monotone. If~$f$ is midpoint injective on~$X_+$, then~$f$ is strictly
monotone.
\end{lemma}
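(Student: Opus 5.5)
Since $f$ is monotone by hypothesis, it suffices to verify the second condition in the definition of strict monotonicity. Suppose $x,y\in X$ satisfy $|x|\le |y|$ and $f(x)=f(y)$; we must show $|x|=|y|$. Monotonicity gives $f(x)=f(|x|)$ and $f(y)=f(|y|)$, so we may replace $x,y$ by $u\deq |x|$ and $v\deq |y|$. This reduces the problem to the positive cone: we have $0\le u\le v$ and $f(u)=f(v)$, and we want $u=v$.

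The plan is to produce a pair of positive vectors whose midpoint has the same $f$-value as both endpoints. The natural candidate is the pair $u,v$ itself. Put $m\deq \frac{u+v}{2}$. Then $u\le m\le v$, and all three vectors are positive, so $|u|\le |m|\le |v|$. Monotonicity of $f$ yields
$$f(u)\le f(m)\le f(v)=f(u),$$
and therefore $f(u)=f(m)=f(v)$. Since $u,v\in X_+$ and $f$ is midpoint injective on $X_+$, it follows that $u=v$, that is, $|x|=|y|$. This completes the argument.

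I expect no step to be a serious obstacle. The only points requiring care are the reduction to moduli and the sandwich inequality for $m$. The reduction uses the observation recorded after the definition that monotone functions satisfy $f(z)=f(|z|)$, which holds because $|z|=\bigl||z|\bigr|$. The sandwich $u\le m\le v$ is immediate from $u\le v$, and positivity of $u$, $m$ and $v$ lets us identify each vector with its modulus when we apply monotonicity. Antisymmetry of the order on $Y$ then forces the equalities.
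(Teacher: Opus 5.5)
Your proof is correct and follows essentially the same route as the paper: pass to the moduli, sandwich the midpoint $\frac{|x|+|y|}{2}$ between $|x|$ and $|y|$ using monotonicity, and then invoke midpoint injectivity on $X_+$. The only cosmetic difference is that you argue directly from the definition ($|x|\le|y|$ and $f(x)=f(y)$ imply $|x|=|y|$), whereas the paper starts from $|y|<|x|$ and argues by contradiction.
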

\begin{proof}
Let~$x,y\in X$ such that~$|y|<|x|$. As~$f$ is monotone, we have~$f(y)\leq f(x)$. We need to show that~$f(y)<f(x)$, so by way of contradiction, suppose~$f(y)=f(x)$. Then we have
$$|y|<\frac{|y|+|x|}{2}<|x|,$$
and so by monotonicity we get
$$f(y)=f(|y|)\leq f\left(\frac{|y|+|x|}{2}\right)\leq f(|x|)=f(x).$$
Thus, $f(|y|)= f\left(\frac{|y|+|x|}{2}\right)= f(|x|)$. As~$f$ is midpoint injective on~$X_+$ by assumption, we get~$|x|=|y|$, which is a contradiction.
\end{proof}
Thus every monotone function that is midpoint injective on the positive cone is strictly monotone. In general, however, the converse fails. For instance, consider the normed lattice~$L_1[0,1]$. It is easy to see that the norm on~$L_1[0,1]$ is strictly monotone but not midpoint injective on the positive cone, as we have
$$\norm{\ind_{[0,1/2)}}_{L_1}=\norm{\ind_{[1/2,1]}}_{L_1}=\frac{1}{2}\norm{\ind_{[0,1]}}_{L_1}=\frac{1}{2}.$$
It is natural to ask whether midpoint injectivity on the positive cone extends to the whole space for monotone functions. The following proposition shows that this is indeed the case under the additional assumption of quasiconvexity.

\begin{prop}\label{prop:MI-on-the-cone}
Let~$X$ and~$Y$ be vector lattices, and let~$f\colon X\to Y$ be
monotone and quasiconvex. Then~$f$ is midpoint injective if and only if
$f$ is midpoint injective on~$X_+$.
\end{prop}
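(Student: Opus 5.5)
The forward implication is immediate, since midpoint injectivity on $X$ restricts to midpoint injectivity on $X_+$. For the converse, I would reduce to the positive cone using monotonicity and quasiconvexity, and then finish with the midpoint injectivity of the modulus from Lemma~\ref{lem:modulus_is_midpoint_injective}.

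Assume $f$ is midpoint injective on $X_+$, and let $x,y\in X$ satisfy $f(x)=f(y)=f\bigl(\frac{x+y}{2}\bigr)$.

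\emph{Step 1: replace $x,y$ by their moduli.} Monotonicity gives $f(z)=f(|z|)$ for every $z\in X$. Since $\bigl|\frac{x+y}{2}\bigr|\le \frac{|x|+|y|}{2}=\bigl|\frac{|x|+|y|}{2}\bigr|$, monotonicity yields
$$f\Bigl(\frac{x+y}{2}\Bigr)\le f\Bigl(\frac{|x|+|y|}{2}\Bigr).$$
Next I bound the right-hand side by $f(|x|)\vee f(|y|)$. If $|x|\ne|y|$, this is quasiconvexity applied to $|x|,|y|$ with $\lambda=\frac12$. If $|x|=|y|$, it holds with equality. Since $f(|x|)=f(x)=f(y)=f(|y|)$, this bound equals $f(x)$. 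The chain therefore starts and ends at the common value $f(x)$, so it is an equality throughout:
$$f(|x|)=f(|y|)=f\Bigl(\frac{|x|+|y|}{2}\Bigr).$$
Midpoint injectivity on $X_+$ now gives $|x|=|y|$.

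\emph{Step 2: recover the modulus of the midpoint.} We have $\bigl|\frac{x+y}{2}\bigr|\le |x|$, and also $f\bigl(\frac{x+y}{2}\bigr)=f(x)$. By Lemma~\ref{lem:strictly_monotone}, $f$ is strictly monotone, so $\bigl|\frac{x+y}{2}\bigr|=|x|$.

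\emph{Step 3: conclude.} We now have $|x|=|y|=\bigl|\frac{x+y}{2}\bigr|$. Lemma~\ref{lem:modulus_is_midpoint_injective} then gives $x=y$.

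The main point requiring care is Step~1. Quasiconvexity is only formulated for distinct points, so the case $|x|=|y|$ must be handled separately, as above. One also needs the inequality $|x+y|\le|x|+|y|$ together with monotonicity to pass from the midpoint of $x,y$ to the midpoint of $|x|,|y|$. The remaining steps are direct applications of earlier lemmas.
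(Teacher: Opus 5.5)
Your proof is correct and follows the paper's argument essentially step for step: the same monotonicity/quasiconvexity chain, midpoint injectivity on $X_+$ to get $|x|=|y|$, Lemma~\ref{lem:strictly_monotone} to identify the modulus of the midpoint, and Lemma~\ref{lem:modulus_is_midpoint_injective} to conclude $x=y$. Your separate treatment of the case $|x|=|y|$ in the quasiconvexity step is extra care that the paper leaves implicit.
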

We note that in \cite[Theorem~2]{HudzikEtAl2000}, Proposition \ref{prop:MI-on-the-cone} was proved in the special case when~$f$ is a lattice norm on a Banach function space. In particular, Proposition \ref{prop:MI-on-the-cone} extends this result to general normed lattices.
\begin{proof}
Midpoint injectivity on~$X$ implies midpoint injectivity on~$X_+$, so it suffices to prove the converse. Assume that~$f$ is midpoint injective on~$X_+$ and let~$x,y\in X$ be such that~$f(x)=f(y)=f\left(\frac{x+y}{2}\right)$. First note that we have~$\left|\frac{x+y}{2}\right|\leq \frac{|x|+|y|}{2}$, so monotonicity and quasiconvexity give us
\begin{equation}\label{eq:MI-on-the-cone}
\begin{aligned}
f\left(\frac{x+y}{2}\right)&=f\left(\left|\frac{x+y}{2}\right|\right)\leq f\left(\frac{|x|+|y|}{2}\right)\leq f(|x|)\vee f(|y|)\\
&=f(x)\vee f(y)=f\left(\frac{x+y}{2}\right).
\end{aligned}
\end{equation}
Since the first and last terms in the preceding chain are equal, all the inequalities in \eqref{eq:MI-on-the-cone} are equalities. Hence~$f\bigl(\frac{1}{2}(|x|+|y|)\bigr)=f\bigl(\frac{1}{2}(x+y)\bigr)=f(|x|)=f(|y|)$. Since by assumption~$f$ is midpoint injective on~$X_+$, we get~$|x|=|y|$. Moreover, \eqref{eq:MI-on-the-cone} also gives  $f\bigl(\frac{1}{2}(|x+y|)\bigr)=f\bigl(\frac{1}{2}(|x|+|y|)\bigr)$, and since~$f$ is strictly monotone by Lemma \ref{lem:strictly_monotone}, we get that~$\frac{|x|+|y|}{2} = \frac{|x+y|}{2}$. Combining these two facts gives us
$$|x|=|y|=\frac{|x+y|}{2}.$$
By Lemma \ref{lem:modulus_is_midpoint_injective}, the modulus is midpoint injective, which implies that~$x=y$, and so~$f$ is midpoint injective as well.
\end{proof}

The following two examples show that neither of the assumptions in Proposition \ref{prop:MI-on-the-cone} can be removed.

\begin{example}
    On every vector lattice, the map~$x\mapsto x^+$ fails to be monotone since for~$0<x\in X$ we have~$|x|=|-x|$ and yet~$x^+ \neq 0=(-x)^+$. On the other hand, as shown in Example \ref{ex:positive-part-function}, it is convex and thus quasiconvex. Moreover, since the positive part is the identity on~$X_+$, it is midpoint injective on~$X_+$. Nevertheless, as we already observed, it fails to be midpoint injective on~$X$.
\end{example}

\begin{example}
Let~$f:\R^2\to \R$ be defined by~$f(x,y)=(1+|x|)(1+|y|)$. The function~$f$ is monotone with respect to the coordinate-wise order. It is not quasiconvex as we have
$$f\left(\frac{(2,0)+(0,2)}{2}\right)=f((1,1))=4>3=f((2,0))\vee f((0,2)).$$
Let us verify that~$f$ is midpoint injective on~$\R^2_+$. Fix~$a,b,c,d\geq 0$ and suppose that~$f(a,b)=f(c,d)=f\left(\frac{a+c}{2},\frac{b+d}{2}\right)$. Set
$$a' = 1+a, \quad b'=1+b, \quad c'=1+c, \quad d'=1+d.$$
Then~$a'b' = c'd'$, and we have
$$a'b' =\frac{(a'+c')(b'+d')}{4}=\frac{a'b'(1+\frac{c'}{a'})(1+\frac{d'}{b'})}{4}\implies 4=\left(1+\frac{c'}{a'}\right)\left(1+\frac{d'}{b'}\right).$$
Put~$r=\frac{c'}{a'}>0$. From~$a'b'=c'd'$ we have~$\frac{d'}{b'}=\frac{1}{r}$. Hence
$$4=(1+r)\left(1+\frac{1}{r}\right) = 2+r+\frac{1}{r}.$$
Thus~$r+\frac{1}{r}=2$, so~$r=1$. Therefore~$a'=c'$ and~$b'=d'$, and hence~$a=c$ and~$b=d$.

On the other hand, we have
$$f(5,0)=f(-1,2)=f\left(\frac{(5,0)+(-1,2)}{2}\right)=f(2,1)=6,$$
so~$f$ is not midpoint injective.
\end{example}

\begin{remark}
\label{rem:positive-cone-generalized-baker}
    Proposition \ref{prop:MI-on-the-cone} also provides a positive-cone formulation of the midpoint injectivity assumption appearing in our abstract generalization of Baker's theorem. Indeed, consider the setting of Lemma~\ref{lem:generalized-Mazur-Ulam-Baker}, and suppose additionally that~$Y$ and~$Z$ are vector lattices and that~$f\colon Y\to Z$ is monotone and quasiconvex. By Proposition~\ref{prop:MI-on-the-cone}, the assumption that~$f$ is midpoint injective may equivalently be replaced by the requirement that~$f$ be midpoint injective on~$Y_+$. Thus, for monotone quasiconvex functions, the midpoint injectivity needed to obtain the~$\Q$-affinity conclusion of Lemma~\ref{lem:generalized-Mazur-Ulam-Baker} may be verified solely on the positive cone.
\end{remark}

\subsection{Positive isometries} We now turn to positive isometries between normed lattices. Our first goal is to prove Proposition \ref{prop:Abramovich-Baker-version}. It shows that the affinity and disjointness-preserving conclusions of Theorem \ref{thm:Veksler} remain valid when strict monotonicity of the codomain is replaced by the stronger assumption of strict convexity.

\begin{proof}[Proof of Proposition \ref{prop:Abramovich-Baker-version}]
    Let~$T:X\to Y$ be a positive isometry. Since~$Y$ is strictly convex, Theorem \ref{thm:Baker} implies that~$T$ is affine. We claim that~$T-T(0)$ is positive. Let~$x\geq 0$. As~$T$ is positive and affine, for any~$n\in \N$ we have
    $$0\leq T(nx)=n[T(x)-T(0)]+T(0).$$
    Since every normed lattice is Archimedean and~$T(0)\geq 0$, we have~$\frac{1}{n}T(0)\downarrow 0$. This gives us
    $$0\leq T(x)-T(0)+\frac{1}{n}T(0)\downarrow T(x)-T(0),$$
    which proves that~$T-T(0)$ is positive.

    Since the norm on~$Y$ is midpoint injective, we can apply Lemma \ref{lem:strictly_monotone} with~$f=\|\cdot\|_Y$ to conclude that it is also strictly monotone. Finally, since~$T-T(0)$ is a positive linear isometry, for every~$x\in X$,
    $$\|(T-T(0)(|x|)\|_Y = \|x\|_X = \|(T-T(0))(x)\|_Y.$$
    Applying Lemma~\ref{lem:strictly-monotone-homomorphism} with~$f=\|\cdot\|_Y$ implies that~$T-T(0)$ is a lattice homomorphism.
\end{proof}

\begin{remark}
To preserve the readability of these results and emphasize their connection with Abramovich's work, we have chosen to restrict Proposition \ref{prop:Abramovich-Baker-version} and Theorem \ref{thm:nonlinear-Veksler} to normed lattices. We note, however, that these theorems may be extended to a much more general setting. For example, combining Lemma~\ref{lem:generalized-Mazur-Ulam-Baker}, Corollary~\ref{cor:order-MI}, and Lemma~\ref{lem:strictly_monotone} yields a version of Proposition~\ref{prop:Abramovich-Baker-version} for Archimedean vector lattices. In this formulation, the norms on~$X$ and~$Y$ are replaced by monotone dyadically homogeneous functions, with the function on~$Y$ assumed midpoint injective.
\end{remark}

It is natural to ask whether strict monotonicity of the codomain norm alone is sufficient to ensure that the normalized map~$T-T(0)$ of a positive isometry is a lattice homomorphism. The following example shows that this is not the case, and that~$T-T(0)$ need not even be disjointness preserving. In particular, it shows that the affinity assumption is necessary in Theorem \ref{thm:Veksler}.

\begin{example}\label{ex:Veksler-linear}
    Equip~$\R^2$ with the norm given by~$\|(a,b)\|_*=2\max\{|a|,|b|\}$. Consider normed lattices~$X=(\R^2,\|\cdot\|_*)$ and~$Y=(\R^4,\|\cdot\|_1)$, both with respect to the coordinate-wise order. We note that the norm on~$Y$ is strictly monotone, but~$Y$ fails to be strictly convex. Let~$T:X\to Y$ be defined via
    $$T(a,b)=((a+b)^+,(a+b)^-,(a-b)^+,(a-b)^-), \qquad (a,b)\in X.$$
    Note that~$T(0)=0$, $T$ is not linear, and~$T(X)\subseteq Y_+$. In particular, $T$ is positive.

    We claim that~$T$ is an isometry. Fix~$x=(a,b)$ and~$y=(c,d)$ in~$X$. It is straightforward to verify that for every~$r,s\in \R$ we have
    \begin{align}
    \label{eq:identity_for_example_59_1}
        |r^+-s^+|+|r^--s^-|=|r-s|,
    \end{align}
    and
    \begin{align}
    \label{eq:identity_for_example_59_2}
        |r+s|+|r-s|=2\max\{|r|,|s|\}.
    \end{align}
    Applying \eqref{eq:identity_for_example_59_1} with~$r=a+b$ and~$s=c+d$, and \eqref{eq:identity_for_example_59_2} with~$r=a-c$ and~$s=b-d$ gives
    \begin{align*}
    \|T(x)-T(y)\|_Y
    &= |(a+b)^+ - (c+d)^+| + |(a+b)^--(c+d)^-| \\
    &+|(a-b)^+-(c-d)^+|+|(a-b)^--(c-d)^-|\\
    &=|(a+b)-(c+d)| + |(a-b)-(c-d)|\\
    &=|(a-c) + (b-d)| + |(a-c) - (b-d)|\\
    &=2\max\{|a-c|,|b-d|\}\\
    &=\|x-y\|_*.
    \end{align*}
    Finally, despite being a positive isometry, $T$ is not disjointness preserving. Indeed, consider~$x=(1,0)$ and~$y=(0,1)$. Then~$x\perp y$ and yet
    $$|T(x)|\wedge |T(y)|=|(1,0,1,0)|\wedge |(1,0,0,1)|=(1,0,0,0).$$
\end{example}

\subsection{Order-preserving isometries and nonlinear lattice structure} The affinity assumption in Theorem \ref{thm:Veksler} cannot be removed under positivity alone. Theorem \ref{thm:nonlinear-Veksler} shows, however, that positivity and affinity may be replaced by order preservation. We begin with a lattice-theoretic lemma that isolates the nonlinear structure forced by preservation of suprema and infima.

\begin{lemma}
\label{lem:equivalence_for_nonlinear_veksler}
  Let~$X$ and~$Y$ be vector lattices, and let~$T:X\to Y$ satisfy~$T(0)=0$ and preserve suprema and infima. Then~$T$ is disjointness preserving and disjointly additive. Moreover, $T$ preserves all lattice operations if and only if it is odd.
\end{lemma}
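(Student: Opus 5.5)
Throughout, "preserves suprema and infima" is read as $T(x\vee y)=T(x)\vee T(y)$ and $T(x\wedge y)=T(x)\wedge T(y)$ for all $x,y\in X$. With $T(0)=0$, this gives $T(x^+)=T(x)^+$ and $T(-x^-)=T(x\wedge 0)=T(x)\wedge 0=-T(x)^-$. The argument rests on a few standard vector lattice facts, chiefly that $x\perp y$ iff $x^+, x^-, y^+, y^-$ are pairwise disjoint, and that for disjoint positive $u,v$ one has $u+v=u\vee v$.

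\emph{Disjointly additive.} First take $u,v\in X_+$ with $u\perp v$. Then $u+v=u\vee v$ and $u\wedge v=0$, so
$$T(u+v)=T(u)\vee T(v),\qquad T(u)\wedge T(v)=T(0)=0.$$
Both $T(u),T(v)$ are positive, since $u\ge 0$ gives $T(u)=T(u\vee 0)=T(u)\vee 0$. By the identity $a\vee b+a\wedge b=a+b$ we get $T(u+v)=T(u)+T(v)$. The same argument with infima handles disjoint $u,v\le 0$: there $u+v=u\wedge v$ and $u\vee v=0$.

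For general disjoint $x,y$, split $x+y=(x^++y^+)-(x^-+y^-)$, where the two bracketed pieces are disjoint positive elements. I need $T(p-q)=T(p)+T(-q)$ for disjoint $p,q\ge0$. Since $p-q=p\vee(-q)$ and $p\wedge(-q)=-q$, the identity $a\vee b+a\wedge b=a+b$ gives
$$T(p-q)+T(-q)=T(p)+T(-q),$$
which is not quite what is needed. Better: $p-q=(p-q)\vee 0+(p-q)\wedge 0$, so
$$T(p-q)=T(p-q)^+ - T(p-q)^-=T((p-q)^+)+T(-(p-q)^-)=T(p)+T(-q).$$
So $T(z)=T(z^+)+T(-z^-)$ holds for every $z$, using $T(z)\vee0+T(z)\wedge 0=T(z)$. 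Applying this to $z=x+y$, the positive cases above give
$$T(x+y)=T(x^++y^+)+T(-(x^-+y^-))=T(x^+)+T(y^+)+T(-x^-)+T(-y^-)=T(x)+T(y).$$

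\emph{Disjointness preserving.} If $x\perp y$, then $|T(x)|=T(x^+)+T(-x^-)^-$, so it suffices to show that $T(x^{\pm})$ and $T(-x^{\pm})$ terms are disjoint from the corresponding terms for $y$. For positive disjoint $u,v$ we already have $T(u)\wedge T(v)=0$; similarly $T(-u)\vee T(-v)=0$ gives disjointness of negatives. For mixed pairs such as $T(u)$ and $T(-v)$, I plan to use monotonicity, since sup-preservation implies order preservation, together with $0\le T(u)$ and $T(-v)\le 0$. This mixed case is the main obstacle. One route is via $T(u)=T(u\vee(-v))$, which requires $u\vee(-v)=u$, valid because $-v\le0\le u$; this does not seem directly helpful. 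Instead I would use $T(u-v)=T(u)+T(-v)$, which is disjointly additive, together with $T(u-v)^+=T(u)$ and $T(u-v)^-=-T(-v)$. These are the positive and negative parts of a single element and hence disjoint. This settles it.

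\emph{Odd iff lattice operations.} If $T$ is odd, then $T(x^-)=-T(-x^-)=T(x)^-$, and $|T(x)|=T(x^+)+T(x^-)=T(x^+ +x^-)=T(|x|)$ by disjoint additivity, so all five operations are preserved. Conversely, if $T$ preserves all lattice operations, then
$$T(x)=T(x)^+-T(x)^-=T(x^+)-T(x^-),$$
and comparing with $T(-x)=T(x^-)-T(x^+)$ gives oddness.
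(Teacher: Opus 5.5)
Your proof is correct and follows essentially the same route as the paper. It uses the identities $T(x^+)=T(x)^+$ and $T(-x^-)=-T(x)^-$, a sign-by-sign disjointness check whose mixed case comes from the positive and negative parts of $T(x^+-y^-)$, and splitting $x+y=(x^++y^+)-(x^-+y^-)$. The only difference is that you get disjoint additivity first via $a\vee b+a\wedge b=a+b$, rather than invoking disjointness preservation as the paper does.

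In a final write-up, delete the abandoned line asserting $p-q=p\vee(-q)$. It is false for disjoint $p,q\ge 0$ with $q\neq 0$, since there $p\vee(-q)=p$, and it would wrongly yield $T(p-q)=T(p)$.
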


We note that related statements have been explored in various settings. Disjointly additive maps preserving infima and suprema on the positive cone were studied in \cite{Abasov2017}, while disjointness preservation within the class of disjotinly additive maps was investigated in \cite{Mykhaylyuk2021}.

\begin{proof}
    We first obtain two useful identities. Since~$T$ preserves infima and suprema, for every~$x\in X$ we have
    \begin{equation}\label{eq:pos-part}
    T(x^+) = T(x \vee 0) = T(x)\vee T(0) = T(x)\vee 0 = [T(x)]^+,
    \end{equation}
    as well as
    \begin{equation}\label{eq:neg-part}
    \begin{aligned}
        -T(-x^-) &= -T(-[(-x) \vee 0]) = -T(x \wedge 0) \\
        &= -(T(x) \wedge T(0)) = -(T(x) \wedge 0) = (-T(x)) \vee 0 = (T(x))^-.
    \end{aligned}
    \end{equation}
    In particular, \eqref{eq:pos-part} shows that~$T$ preserves positive parts. Next we show that~$T$ is disjointness preserving. Fix~$x,y\in X$ with~$x\perp y$. We show that~$T(x)\perp T(y).$ For this purpose, we claim that~$[T(x)]^\delta\perp [T(y)]^\varepsilon$ for any choice of~$\delta,\varepsilon\in \{+,-\}$.

    First, since~$x^+\wedge y^+=0,$ preservation of infima and positive parts gives
    $$[T(x)]^+ \wedge [T(y)]^+=T(x^+)\wedge T(y^+) = T(0)=0,$$
    and therefore~$[T(x)]^+\perp [T(y)]^+.$

    Next, since~$(-x^-)\vee (-y^-) = -(x^-\wedge y^-)=0,$ preservation of suprema gives $$T(-x^-)\vee T(-y^-) = T(0)=0.$$
    Using the identity \eqref{eq:neg-part} we get
        $$[T(x)]^-\wedge [T(y)]^- = -T(-x^-) \wedge -T(-y^-)=-\left(T(-x^-)\vee T(-y^-)\right)=0,$$
    hence~$[T(x)]^-\perp [T(y)]^-.$

    Finally, it suffices to show that~$[T(x)]^+ \perp [T(y)]^-$, as the other case follows symmetrically. Observe that, since~$x^+\perp y^-$, we have
    $$(x^+-y^-)^+ = x^+, \qquad (x^+-y^-)^-=y^-.$$
    Put~$z=x^+-y^-$. Then using preservation of positive parts gives
    $$[T(z)]^+ = T(x^+).$$
    On the other hand, using identity \eqref{eq:neg-part} gives
    $$[T(z)]^-=-T(-z^-) = -T(-y^-).$$
    Since positive and negative parts of any vector are disjoint, this gives~$T(x^+)\perp -T(-y^-).$ Using the fact that~$T$ preserves positive parts together with \eqref{eq:neg-part}, we obtain
    $$[T(x)]^+ \perp [T(y)]^-.$$
    This proves the claim. Finally, since
    $$|T(x)| = [T(x)]^+ + [T(x)]^-, \qquad |T(y)| = [T(y)]^+ + [T(y)]^-,$$
    as all the vectors are pairwise disjoint, we get~$T(x)\perp T(y)$.

    We now show that~$T$ is disjointly additive. Fix~$x,y\in X$ with~$x\perp y$. Since we have~$x^+\perp y^+$, this implies that~$x^++y^+ = x^+\vee y^+$. Using \eqref{eq:pos-part} and preservation of suprema, we obtain
    \begin{equation}\label{eq:pos-dis-add}
    [T(x+y)]^+ = T((x+y)^+) = [T(x)]^+ \vee [T(y)]^+=[T(x)]^+ + [T(y)]^+,
    \end{equation}
    where the last equality follows from disjointness preservation and \eqref{eq:disjointness-fact2}. Similarly, because~$x^-\perp y^-$, we have
    $$-(x^-+y^-) = -(x^-\vee y^-) = (-x^-)\wedge (-y^-).$$
    Using \eqref{eq:neg-part} and preservation of infima, we obtain
    \begin{equation}\label{eq:neg-dis-add}
    [T(x+y)]^-= -T(-(x+y)^-) = [T(x)]^- \vee [T(y)]^-=[T(x)]^- + [T(y)]^-.
    \end{equation}
    Thus \eqref{eq:pos-dis-add} and \eqref{eq:neg-dis-add} imply that
    \begin{align*}
        T(x+y)&=[T(x+y)]^+ - [T(x+y)]^-\\
        &=[T(x)]^++[T(y)]^+ - [T(x)]^- - [T(y)]^-\\
        &=T(x)+T(y),
    \end{align*}
    so~$T$ is disjointly additive.

    We now prove the final equivalence. Suppose first that~$T$ is odd. As~$T$ preserves infima, suprema, and positive parts, we only have to show that it preserves negative parts and modulus. Indeed, the preservation of negative parts follows immediately from oddness and \eqref{eq:neg-part}. Moreover, for any~$x\in X$,
        $$T(|x|) = T(x^+ \vee x^-) = T(x^+) \vee T(x^-) = [T(x)]^+ \vee [T(x)]^- = |T(x)|.$$

    Suppose now that~$T$ preserves all lattice operations. Fix~$x\in X$. Using the fact that~$T$ preserves positive and negative parts, we get~$[T(-x)]^-=[T(x)]^+$ as well as~$[T(-x)]^+=[T(x)]^-$. Thus we have
    $$T(-x) = [T(-x)]^+ - [T(-x)]^- = [T(x)]^- - [T(x)]^+ = -T(x).$$
    Therefore, $T$ is odd.
\end{proof}

We are now in a position to prove Theorem \ref{thm:nonlinear-Veksler}.

\begin{proof}[Proof of Theorem \ref{thm:nonlinear-Veksler}.]
First note that~$T=S-S(0)$ is an order-preserving isometry satisfying~$T(0)=0$. We will make use of the following known identity (see \cite[Theorem 1.7 (3)]{AliprantisBurkinshaw2006}): for any~$u,v\in X$ we have
\begin{equation}\label{eq:Veksler-identity}
|u-v|= u\vee v-u\wedge v.
\end{equation}

We now show that~$T$ preserves suprema and infima. Once this is established, the remaining conclusions follow from Lemma \ref{lem:equivalence_for_nonlinear_veksler}. For any~$x,y\in X$, order preservation gives us
\begin{equation}
\label{eq:two_expressions_positive_and_negative}
  T(x\wedge y)\leq T(x)\wedge T(y)\leq T(x)\vee T(y)\leq T(x\vee y).
\end{equation}
Then \eqref{eq:Veksler-identity} gives us
\begin{equation}
\label{eq:two_expressions_positive}
|T(x)-T(y)|= T(x)\vee T(y)-T(x)\wedge T(y) \leq T(x\vee y)-T(x\wedge y).
\end{equation}
However, the isometry condition gives
$$\norm{T(x\vee y)-T(x\wedge y)}_Y=\norm{x\vee y - x\wedge y}_X=\norm{x-y}_X=\norm{T(x)-T(y)}_Y,$$
where again we have made use of \eqref{eq:Veksler-identity}. Then strict monotonicity implies that $$|T(x)\vee T(y)-T(x)\wedge T(y)| = |T(x\vee y)-T(x\wedge y)|.$$
Since both of these expressions are positive due to \eqref{eq:two_expressions_positive}, rearranging gives
\begin{equation}\label{eq:negative-positve-zero}
  T(x)\vee T(y) - T(x\vee y)= T(x)\wedge T(y)-T(x\wedge y).
\end{equation}
Using \eqref{eq:two_expressions_positive_and_negative}, we note that the left-hand side of \eqref{eq:negative-positve-zero} is negative, while the right-hand side is positive, so both terms must be zero. Thus, $T$ preserves suprema and infima. Using Lemma \ref{lem:equivalence_for_nonlinear_veksler}, we conclude the proof.
\end{proof}

Our final example shows that linearity is not guaranteed by Theorem \ref{thm:nonlinear-Veksler}.
\begin{example}\label{ex:nonlinear-Veksler}
Let~$\varphi:\R \to \R$ be~$1$-Lipschitz, i.e., $\varphi$ satisfies
\begin{equation}\label{eq:Lipschitz}
  |\varphi(y)-\varphi(x)|\leq |y-x|, \qquad x,y\in \R.
\end{equation}
Consider the map~$T:L_1[0,1]\to L_1[0,1]$ defined by
$$T(f)(x)\deq \begin{cases}
f(2x)+\varphi(f(2x)),& x\in \left[0,\frac{1}{2}\right],\\
f(2x-1)-\varphi(f(2x-1)),& x\in \left(\frac{1}{2},1\right].
\end{cases}$$
We first note that~$T$ is well defined, since for any~$x\in \R$ we have
$$|\varphi(x)|\leq |\varphi(0)|+|x|,$$
and so~$\varphi\circ f\in L_1[0,1]$ whenever~$f\in L_1[0,1]$. We show that~$T$ is an order-preserving isometry.

First, consider~$f,g\in L_1[0,1]$ with~$f\leq g$. For~$x\in \left[0,\frac{1}{2}\right]$, \eqref{eq:Lipschitz} gives us
\begin{align*}
T(g)(x)-T(f)(x)&=g(2x)+\varphi(g(2x))-f(2x)-\varphi(f(2x))\\
&= g(2x)-f(2x)+\varphi(g(2x))-\varphi(f(2x))\\
&\geq g(2x)-f(2x)-|\varphi(g(2x))-\varphi(f(2x))|\\
&\geq g(2x)-f(2x)-|g(2x)-f(2x)|=0,
\end{align*}
since~$g\geq f$. Therefore~$T(g)(x)\geq T(f)(x)$. Using the same reasoning, we obtain~$T(g)(x)\geq T(f)(x)$ for~$x\in \left(\frac{1}{2},1\right]$, which lets us conclude that~$T(g)\geq T(f)$, so~$T$ is order-preserving.

Next, we obtain a useful identity. For~$x,y\in \R$, applying  \eqref{eq:identity_for_example_59_2} with~$r=x-y$ and~$s=\varphi(x)-\varphi(y)$ together with
\eqref{eq:Lipschitz} yields
\begin{equation}\label{eq:straightforward-fact}
  |x-y+\varphi(x)-\varphi(y)|+|x-y-[\varphi(x)-\varphi(y)]|=2|x-y|.
\end{equation}
Then for any~$f,g\in L_1[0,1]$, \eqref{eq:straightforward-fact} gives
\begin{align*}
\norm{T(g)-T(f)}_{L_1}&=\int_{0}^{\frac{1}{2}} |g(2x)-f(2x)+\varphi(g(2x))-\varphi(f(2x))|\dx\\
&+ \int_{\frac{1}{2}}^{1} |g(2x-1)-f(2x-1)-\varphi(g(2x-1))+\varphi(f(2x-1))|\dx\\
&=\frac{1}{2}\int_{0}^{1} |g(x)-f(x)+\varphi(g(x))-\varphi(f(x))|\dx\\
&+\frac{1}{2}\int_{0}^{1} |g(x)-f(x)-\varphi(g(x))+\varphi(f(x))|\dx\\
&=\int_{0}^{1} |g(x)-f(x)|\dx=\norm{g-f}_{L_1}.
\end{align*}
Thus~$T$ is also an isometry. Since the norm on~$L_1[0,1]$ is strictly monotone, Theorem \ref{thm:nonlinear-Veksler} implies that~$T-T(0)$ preserves suprema and infima and is therefore both disjointness preserving and disjointly additive.

Furthermore, if~$\varphi$ is chosen to be odd, then~$\varphi(0)=0$, so~$T(0)=0$, and~$T$ itself is odd. Hence Theorem \ref{thm:nonlinear-Veksler} shows that~$T$ preserves all lattice operations.

However, it is easy to see that~$T$ need not be affine. For instance, take~$\varphi(t)\deq \sin(t)$. We note that this gives~$T(0)=0$. If~$\ind$ denotes the constant~$1$ function, then we have~$T(\pi \ind)=\pi \ind$. On the other hand,
$$T\left(\frac{\pi}{2}\ind\right)(x)=\begin{cases}
\frac{\pi}{2}+1,& x\in \left[0,\frac{1}{2}\right],\\
\frac{\pi}{2}-1,& x\in \left(\frac{1}{2},1\right],
\end{cases}$$
so~$2T\left(\frac{\pi}{2}\ind\right)\neq T(\pi\ind)$. Thus~$T$ is a disjointness preserving, disjointly additive isometry that preserves all lattice operations, yet it is not a lattice homomorphism.
\end{example}

\subsection*{Acknowledgments} The authors are grateful to Vladimir Troitsky for many valuable conversations during the development of this paper and for his encouragement in pursuing this topic. We are also grateful to him for bringing \cite{Abramovich1988} to our attention.

The authors acknowledge the use of ChatGPT-5.6 Sol during the preparation of this manuscript for correcting grammar and style, exploring proof ideas, conducting literature searches, and improving the readability of the exposition. The authors assume responsibility for all content.

\bibliographystyle{plain}
\bibliography{bibfile}

@book{AbramovichAliprantis2002,
	address = {Providence, R.I},
	author = {Abramovich, Y. A. and Aliprantis, Charalambos D.},
	booktitle = {An invitation to operator theory},
	isbn = {0821821466},
	language = {eng},
	lccn = {2002074420},
	publisher = {American Mathematical Society},
	series = {Graduate studies in mathematics, v. 50},
	title = {An invitation to operator theory},
	year = {2002}}

@article{OBrienEtAl2023,
	author = {O'Brien, M. and Troitsky, V. G. and van der Walt, J. H.},
	doi = {10.2989/16073606.2021.2012721},
	journal = {Quaestiones Mathematicae},
	number = {2},
	pages = {243--280},
	publisher = {Taylor \& Francis},
	title = {Net convergence structures with applications to vector lattices},
	volume = {46},
	year = {2023}}

@article{Abasov2017,
title = {On extensions of some nonlinear maps in vector lattices},
journal = {Journal of Mathematical Analysis and Applications},
volume = {455},
number = {1},
pages = {516-527},
year = {2017},
issn = {0022-247X},
doi = {https://doi.org/10.1016/j.jmaa.2017.05.063},
url = {https://www.sciencedirect.com/science/article/pii/S0022247X17305231},
author = {Abasov, N. and Pliev, M.}
}

@article{Mykhaylyuk2021,
author={Mykhaylyuk, V. and Pliev, M. and Popov, M.},
title={The lateral order on {Riesz} spaces and orthogonally additive operators},
journal={Positivity},
year={2021},
month={Apr},
day={01},
volume={25},
number={2},
pages={291-327},
issn={1572-9281},
doi={10.1007/s11117-020-00761-x},
url={https://doi.org/10.1007/s11117-020-00761-x}
}

@article{Abramovich1988,
  author  = {Abramovich, Y. A.},
  title   = {Isometries of Normed Lattices},
  journal = {Optimizatsiya},
  volume  = {43(60)},
  pages   = {74--80},
  year    = {1988},
  note    = {In Russian}
}

@article{MartinezLegaz2012,
	author = {Mart{\'\i}nez-Legaz, J. E. and Martin{\'o}n, A.},
	doi = {10.1007/s11750-011-0222-8},
	isbn = {1863-8279},
	journal = {TOP},
	number = {2},
	pages = {503--516},
	title = {On the infimum of a quasiconvex vector function over an intersection},
	volume = {20},
	year = {2012}}

@book{KusraevKutateladze1995,
	author = {Kusraev, A. G. and Kutateladze, S. S.},
	booktitle = {Subdifferentials: theory and applications},
	isbn = {0792333896},
	publisher = {Kluwer Academic Publishers},
    address = {Dordrecht},
	series = {Mathematics and its applications},
	title = {Subdifferentials: theory and applications},
	volume = {323},
	year = {1995}}

@article{Miyajima1983,
	author = {Miyajima, S.},
	doi = {10.14492/hokmj/1381757794},
	journal = {Hokkaido Mathematical Journal},
	number = {1},
	pages = {83--91},
	title = {Structure of {Banach} Quasi-Sublattices},
	volume = {12},
	year = {1983}}

@article{Dong2015,
title = {On approximate isometries and application to stability of a functional equation},
journal = {Journal of Mathematical Analysis and Applications},
volume = {426},
number = {1},
pages = {125-137},
year = {2015},
issn = {0022-247X},
doi = {https://doi.org/10.1016/j.jmaa.2015.01.045},
url = {https://www.sciencedirect.com/science/article/pii/S0022247X15000645},
author = {Dong, Y.},
}

@article{LemmensVanGaans2003,
  author  = {Lemmens, B. and van Gaans, O. W.},
  title   = {Periods of order-preserving nonexpansive maps on strictly convex normed spaces},
  journal = {Journal of Nonlinear and Convex Analysis},
  volume  = {4},
  number  = {3},
  pages   = {353--363},
  year    = {2003}
}

@article{AbasovPliev2018,
  author  = {Abasov, N. and Pliev, M.},
  title   = {Disjointness-preserving orthogonally additive operators in vector lattices},
  journal = {Banach Journal of Mathematical Analysis},
  volume  = {12},
  number  = {3},
  pages   = {730--750},
  year    = {2018},
  doi     = {10.1215/17358787-2018-0001}
}

@article{Feldman2019,
  author  = {Feldman, W. A.},
  title   = {A factorization for orthogonally additive operators on {B}anach lattices},
  journal = {Journal of Mathematical Analysis and Applications},
  volume  = {472},
  number  = {1},
  pages   = {238--245},
  year    = {2019},
  doi     = {10.1016/j.jmaa.2018.11.021}
}

@article{Feldman2013,
title = {Lattice preserving maps on lattices of continuous functions},
journal = {Journal of Mathematical Analysis and Applications},
volume = {404},
number = {2},
pages = {310-316},
year = {2013},
issn = {0022-247X},
doi = {https://doi.org/10.1016/j.jmaa.2013.03.017},
url = {https://www.sciencedirect.com/science/article/pii/S0022247X13002102},
author = {Feldman, W. A.}
}

@article{LeungTangPersistence2016,
  author  = {Leung, D. H. and Tang, W.-K.},
  title   = {Persistence of {Banach} lattices under nonlinear order isomorphisms},
  journal = {Positivity},
  volume  = {20},
  number  = {3},
  pages   = {709--717},
  year    = {2016},
  doi     = {10.1007/s11117-015-0382-0}
}

@article{LeungTang2016,
  author  = {Leung, D. H. and Tang, W.-K.},
  title   = {Nonlinear order isomorphisms on function spaces},
  journal = {Dissertationes Mathematicae},
  volume  = {517},
  pages   = {1--75},
  year    = {2016},
  doi     = {10.4064/dm737-11-2015}
}

@article{MazonSegura1990,
  author  = {Maz{\'o}n, J. M. and Segura de Le{\'o}n, S.},
  title   = {Order bounded orthogonally additive operators},
  journal = {Revue Roumaine de Math{\'e}matiques Pures et Appliqu{\'e}es},
  volume  = {35},
  number  = {4},
  pages   = {329--353},
  year    = {1990}
}

@article{Feldman2017,
  author  = {Feldman, W. A.},
  title   = {A characterization of non-linear maps satisfying orthogonality properties},
  journal = {Positivity},
  volume  = {21},
  number  = {1},
  pages   = {85--97},
  year    = {2017},
  doi     = {10.1007/s11117-016-0408-2}
}

@article{Pliev2021,
  author  = {Pliev, M. A.},
  title   = {On ${C}$-compact orthogonally additive operators},
  journal = {Journal of Mathematical Analysis and Applications},
  volume  = {494},
  number  = {1},
  pages   = {124594},
  year    = {2021},
  doi     = {10.1016/j.jmaa.2020.124594}
}

@article{Ger1993,
  title={On a characterization of strictly convex spaces},
  author={Ger, R.},
  journal={Atti della Accademia delle Scienze di Torino. Classe di Scienze Fisiche, Matematiche e Naturali},
  volume={127},
  number={3-4},
  pages={131-138},
  year={1993}
}

@article{Skof1993,
  title={On the functional equation $\|f(x+y)-f(x)\|=\|f(y)\|$},
  author={Skof, F.},
  journal={Atti Accad. Sci. Torino Cl. Sci. Fis. Mat. Natur},
  volume={127},
  pages={229--237},
  year={1993}
}

@article{Sikorska2005,
title = {Stability of the preservation of the equality of distance},
journal = {Journal of Mathematical Analysis and Applications},
volume = {311},
number = {1},
pages = {209-217},
year = {2005},
issn = {0022-247X},
doi = {https://doi.org/10.1016/j.jmaa.2005.02.039},
url = {https://www.sciencedirect.com/science/article/pii/S0022247X05001423},
author = {Sikorska, J.}
}

@article{CelikErcan2010,
    url = {https://doi.org/10.1515/gmj.2010.040},
    title = {Mazur-{U}lam theorem for {R}iesz spaces},
    author = {{\c C}elik, C. and Ercan, Z.},
    pages = {635--640},
    volume = {17},
    number = {4},
    journal = {Georgian Mathematical Journal},
    doi = {doi:10.1515/gmj.2010.040},
    year = {2010}
}

@article{HudzikEtAl2000,
	author = {Hudzik, H. and Kami{\'n}ska, A. and Masty{\l}o, M.},
	journal = {Rocky Mountain Journal of Mathematics},
	number = {3},
	pages = {933--950},
	title = {Monotonicity and Rotundity Properties in {Banach} Lattices},
	volume = {30},
	year = {2000}}

@incollection{ForalewskiEtAl2016,
    publisher = {Birkhäuser},
	address = {Cham},
	author = {Foralewski, P. and Hudzik, H. and Kowalewski, W. and Wis{\l}a, M.},
	booktitle = {Ordered Structures and Applications},
	isbn = {978-3-319-27842-1},
	pages = {203--232},
    editor = {de Jeu, M. and de Pagter, B. and van Gaans, O. and Veraar, M.},
	title = {Monotonicity Properties of {Banach} Lattices and Their Applications - a Survey},
	year = {2016}}

@book{AliprantisBurkinshaw2006,
	author = {Aliprantis, C. D. and Burkinshaw, O.},
	publisher = {Springer},
	title = {Positive operators},
    address   = {Dordrecht},
	year = {2006}}

@article{Vaisala2003,
	author = {V{\"a}is{\"a}l{\"a}, J.},
	doi = {10.1080/00029890.2003.11920004},
	journal = {The American Mathematical Monthly},
	number = {7},
	pages = {633--635},
	publisher = {Taylor \& Francis},
	title = {A Proof of the {Mazur-Ulam} Theorem},
	volume = {110},
	year = {2003}}

@article{Nica2012,
	author = {Nica, B.},
	doi = {https://doi.org/10.1016/j.exmath.2012.08.010},
	issn = {0723-0869},
	journal = {Expositiones Mathematicae},
	number = {4},
	pages = {397-398},
	title = {The {Mazur-Ulam} theorem},
	url = {https://www.sciencedirect.com/science/article/pii/S0723086912000515},
	volume = {30},
	year = {2012}}

@article{Baker1971,
	author = {J. A. Baker},
	issn = {00029890, 19300972},
	journal = {The American Mathematical Monthly},
	number = {6},
	pages = {655--658},
	publisher = {[Taylor & Francis, Ltd., Mathematical Association of America]},
	title = {Isometries in Normed Spaces},
	url = {http://www.jstor.org/stable/2316577},
	urldate = {2026-03-17},
	volume = {78},
	year = {1971}}

@article{Mazur1932,
	author = {Mazur, S. and Ulam, S.},
	journal = {C. R. Acad. Sci. Paris},
	title = {Sur les transformations isom{\'e}triques d'espaces vectoriels norm{\'e}s},
	volume = {194},
    pages = {946-948},
	year = {1932}}

\end{document}